\newtheorem{theorem}{Theorem}[section]
\newtheorem{remark}[theorem]{Remark}
\newtheorem{lemma}[theorem]{Lemma}
\newtheorem{proposition}[theorem]{Proposition}
\newtheorem{corollary}[theorem]{Corollary}
\newtheorem*{theorem*}{Theorem (Denjoy-Wolff)}
\newtheorem*{theoremA*}{Theorem A}
\newtheorem*{theoremB1*}{Theorem B1}
\newtheorem*{theoremB2*}{Theorem B2}
\newtheorem*{theoremC*}{Theorem C}
\renewcommand{\section}%
   {\setcounter{equation}{0}\@startsection {section}{1}{\z@}{-3.5ex plus -1ex
  minus -.2ex}{2.3ex plus .2ex}{\Large\bf}}
\numberwithin{equation}{section}
\newcommand{\C}{\mathbb{C}} 
 \DeclareMathOperator{\dist}{dist}
\DeclareMathOperator{\diam}{diam}
\title[Inner and weak-2-local derivations]{Inner derivations and weak-2-local derivations on the C$^*$-algebra $C_0(L,A)$}
\date{\today}
\author[E. Jord\'a]{Enrique Jord\'a}
\address{Escuela Polit\'ecnica Superior de Alcoy, IUMPA, Universitat Polit\'ecnica de Valencia, Plaza Ferr\'andiz y Carbonell 1, 03801 Alcoy, Spain}
\email{ejorda@mat.upv.es}
\author[A.M. Peralta]{Antonio M. Peralta}
\address{Departamento de An{\'a}lisis Matem{\'a}tico, Universidad de Granada,
Facultad de Ciencias 18071, Granada, Spain}
\email{aperalta@ugr.es}
\thanks{First author partially supported by the Spanish Ministry of Economy and Competitiveness Project MTM2013-43540-P. Second author partially supported by the Spanish Ministry of Economy and Competitiveness and European Regional Development Fund project no. MTM2014-58984-P and Junta de Andaluc\'{\i}a grant FQM375.}
\keywords{derivation; 2-local linear map; 2-local $^*$-derivation; 2-local derivation; weak-2-local mapping, weak-2-local derivation}
\subjclass[2010]{47B49, 46L05, 46L40, 46T20, 47L99}
\begin{document}
\begin{abstract} Let $L$ be a locally compact Hausdorff space. Suppose $A$ is a C$^*$-algebra with the property that every weak-2-local derivation on $A$ is a {\rm(}linear{\rm)} derivation. We prove that every weak-2-local derivation on $C_0(L,A)$ is a {\rm(}linear{\rm)} derivation. Among the consequences we establish that if $B$ is an atomic von Neumann algebra or on a compact C$^*$-algebra, then every weak-2-local derivation on $C_0(L,B)$ is a linear derivation. We further show that, for a general von Neumann algebra $M$, every 2-local derivation on $C_0(L,M)$ is a linear derivation. We also prove several results representing derivations on $C_0(L,B(H))$ and on $C_0(L,K(H))$ as inner derivations determined by multipliers.
\end{abstract}

\maketitle

\section{Introduction}

Inspired by the Kowalski-S{\l}odkowski theorem (see \cite{KoSlod}), P. \v{S}emrl introduced in \cite{Semrl97} the notions of 2-local derivations and automorphisms. This notion and subsequent generalizations have been intensively explored in recent papers (see, for example, \cite{NiPe2014,NiPe2015,CaPe2015,CaPe2016} and \cite{JorPe}). More recent contributions deal with the following general notion: Let $\mathcal{S}$ be a subset of the space $L(X,Y)$, of all linear maps between Banach spaces $X$ and $Y$, a (non-necessarily linear nor continuous) mapping $\Delta : X\to Y$ is said to be a \emph{weak-2-local $\mathcal{S}$ map} (respectively, a \emph{2-local $\mathcal{S}$-map}) if for every $x,y\in X$ and $\phi\in Y^{*}$ (respectively, for every $x,y\in X$), there exists $T_{x,y,\phi}\in \mathcal{S}$, depending on $x$, $y$ and $\phi$ (respectively, $T_{x,y}\in \mathcal{S}$, depending on $x$ and $y$), satisfying $$\phi \Delta(x) = \phi T_{x,y,\phi}(x), \hbox{ and  }\phi \Delta(y) = \phi T_{x,y,\phi}(y)$$ (respectively, $ \Delta(x) =  T_{x,y}(x),$ and $  \Delta(y) = T_{x,y}(y)$).\smallskip

The real advantage of this definition is that we can unify several notions under the same perspective. That is, if $\mathcal{S}$ is the set Der$(A)$, of all derivations on a Banach algebra $A$ (respectively, the set $\hbox{$^*$-Der}(A)$ of all $^*$-derivations on a C$^*$-algebra $A$, or, more generally, the set of all symmetric maps from $A$ into another C$^*$-algebra $B$), (weak-)2-local $\mathcal{S}$-maps are called \emph{(weak-)2-local derivations} (respectively, \emph{(weak-)2-local $^*$-derivations} or \emph{(weak-)2-local symmetric maps}). We recall that a mapping $\Delta$ from a C$^*$-algebra $A$ into a C$^*$-algebra $B$ is called \emph{symmetric} if $\Delta (a^*) =\Delta (a)^*$, for every $a\in A$.\smallskip

The linearity of a 2-local $\mathcal{S}$-map is not always guaranteed. For example, as noted in \cite{JorPe}, for $\mathcal{S}=K(X,Y)$ the space of compact linear mappings from $X$ to $Y$, every 1-homogeneous map $\Delta: X\to Y$, i.e. $\Delta(\alpha x)=\alpha \Delta(x)$ for each $\alpha\in\C$, is a 2-local $\mathcal{S}$-map. However, many interesting subsets enjoy good stability properties for (weak-)2-local perturbations. For example,  let $H$ be an infinite-dimensional separable Hilbert space. P. \v{S}emrl proved in \cite{Semrl97} that every 2-local automorphism (respectively, every 2-local derivation) on the von Neumann algebra $B(H)$, of all bounded linear operators on $H$, is an automorphism (respectively, a derivation). Sh. Ayupov and K. Kudaybergenov extended \v{S}emrl's theorem by showing that every 2-local derivation on a von Neumann algebra $M$ is a derivation (see \cite{AyuKuday2014}). The problem whether the same conclusion remains true for general C$^*$-algebras is being intensively studied. Exploring new types of C$^*$-algebras, Sh. Ayupov and F.N. Arzikulov prove that for every compact Hausdorff space $\Omega$ and for every Hilbert space $H$, each 2-local derivation on $C(\Omega,B(H))$ is a derivation (see \cite{AyuArz}).\smallskip

Recent studies show that, even under weaker hypothesis, similar conclusions remain true for other clases of weak-2-local $\mathcal{S}$-maps. M. Niazi and the second author of this note prove in \cite{NiPe2014,NiPe2015} that every weak-2-local derivation on a finite dimensional C$^*$-algebra is a linear derivation, and every weak-2-local $^*$-derivation on $B(H)$ is a linear $^*$-derivation. A generalization of this theorem is established, in collaboration with J.C. Cabello, by showing that every weak-2-local symmetric map between general C$^*$-algebras is a linear map \cite[Theorem 2.5]{CaPe2015}. Consequently, every weak-2-local $^*$-derivation on a general C$^*$-algebra is a (linear) $^*$-derivation, and  every 2-local $^*$-homomorphism between C$^*$-algebras is a (linear) $^*$-homomorphism (see \cite[Corollary 2.6]{CaPe2015}). In a more recent contribution, due to the same authors, it is established that every weak-2-local derivation on $B(H)$ or on $K(H)$ is a linear derivation, where $H$ is an arbitrary complex Hilbert space and $K(H)$ stands for the C$^*$-algebra of all compact operators on $H$. Actually, every weak-2-local derivation on an atomic von Neumann algebra or on a compact C$^*$-algebra is a linear derivation \cite{CaPe2016}. We recently enlarged the class of C$^*$-algebras $A$ satisfying that each weak-2-local derivation on $A$ is a linear derivation by showing that every C$^*$-algebra of the form $C(\Omega,B(H))$ or $C(\Omega, K(H))$ lies in this class.\smallskip

The purpose of the first part of this work is to continue with the study of weak-2-local derivations on general C$^*$-algebras. Unlike von Neumann algebras, a general C$^*$-algebra need not be unital. A prototype is a C$^*$-algebra of the form $C_0(L,A)$, where $L$ is a locally compact Hausdorff space and $A$ is any C$^*$-algebra. The main results in Section \ref{sec: weak-2-local der on CoL} prove that if every (weak-)2-local derivation on $A$ is a linear derivation, then every (weak-)2-local derivation on $C_0(L,A)$ is a linear derivation (see Theorems \ref{t pattern} and \ref{t pattern 2local}). Among the consequences we shall show that if $B$ is an atomic von Neumann algebra or a compact C$^*$-algebra, then every weak-2-local derivation on $C_0(L,B)$ is a linear derivation (see Theorem \ref{t weak2local derivations C0LB(H)}). Furthermore, for a general von Neumann algebra $M$, every 2-local derivation on $C_0(L,M)$ is a linear derivation (compare Corollary \ref{c 2local derivations C0LB(H)}).\smallskip

Previous studies on weak-2-local derivations rely on the precise representation of derivations on certain C$^*$-algebras as inner derivations. We recall that a derivation on a Banach algebra $A$ is a linear mapping $D: A\to A$ satisfying $D(ab ) = D(a) b + a D(b)$ for every $a,b\in A$. For each $z\in A$, the mapping $$\hbox{ad}_{z}: A\to A,\ \ x\mapsto \hbox{ad}_{z} (x) =[z,x] =  z x - x z,$$ is a derivation on $A$. Derivations of the form $\hbox{ad}_{z}$ are called \emph{inner derivations}. A celebrated result due to S. Sakai proves that every derivation on a von Neumann algebra is inner (see \cite[Theorem 4.1.6]{S}). The existence of C$^*$-algebras admitting derivations which are not inner is a well known fact (see \cite[Example 1.4.8]{S}). The question whether every derivation on a concrete C$^*$-algebra is inner or not gains importance after these results. For a von Neumann algebra $M$, C.A. Akemann and B.E. Johnson prove that every derivation of the C$^*$-tensor product $C(\Omega) \otimes M\cong C(\Omega, M)$ is inner. In certain cases, there exist derivations which are not inner however they are very similar to inner derivations. For example, by Proposition 2.10 in \cite{JorPe}, for each compact Hausdorff space $\Omega$ with a topology $\tau$, each complex Hilbert space $H$, and each derivation $D:C(\Omega,K(H))\to C(\Omega,K(H))$, there exists a $\tau$-weak$^*$-continuous, bounded mapping $Z_0:  \Omega\to B(H)$ satisfying $D(X)= [Z_0,X]$, for every $X\in C(\Omega,K(H))$. Remark 2.11 in \cite{JorPe} shows that the mapping $Z_0$ need not be, in general, $\tau$-norm continuous.\smallskip

In the second part of this paper we study when a derivation on a C$^*$-algebra of the form $C_0(L,A)$ can be represented by an inner derivation on its multiplier algebra. In a first result we obtain a ``local'' representation theorem for derivations on $C_0(L,M)$, where $M$ is an arbitrary von Neumann algebra and $L$ is a locally compact Hausdorff space. Concretely, let $D: C_0(L,M) \to C_0(L,M)$ be a derivation. Given $\varepsilon>0$, and a compact subset $K\subset L$, then there exists a continuous and bounded function $Z_K : K \to M$ such that $\| Z_K \| \leq ({1+2 \varepsilon}) \|D\|$ and $D(X) (t)= [Z_K , X](t)$, for every $X\in C_0(L,M)$ and every $t\in K$ (see Theorem \ref{t representation of derivations on C0LM}). Theorem \ref{t AkJohn paracompact strengtened with control on the norm} proves that under the additional hypothesis of $L$ being paracompact, then for each $\varepsilon>0$ there exists a continuous and bounded function $Z_0 : L \to M$ such that $\| Z_0 \| \leq ({1+2 \varepsilon}) \|D\|$ and $D(X) = [Z_0 , X]$, for every $X\in C_0(L,M)$. An appropriate version for derivations on $C_0(L,K(H))$ is presented in Theorem \ref{t representation of derivations on C0LKH} and Corollary \ref{c tau-strong* continuity of Z0 in K(H)}.\smallskip

Finally, after proving that for every von Neumann algebra $M$, every derivation on $C_0(L,M)$ is point-weak$^*$ continuous (see Proposition \ref{p automatic point-weak* continuity}), we establish a global representation theorem for derivations on $C_0(L,B(H))$. We show that for each derivation $D$ on $C_0(L,B(H))$, then  there exists a bounded function $Z_0 : L \to B(H)$ which is $\tau$-to-norm continuous, $\| Z_0 \| \leq 2 \|D\|$ and $D(X) = [Z_0 , X] $, for every $X\in C_0(L,B(H))$ (compare Theorems \ref{t representation of derivations on C0LBH} and \ref{thm final tau-strong* continuity of Z0}). These results are directly connected with the studies on derivations of $C_0(L,B(H))$ conducted by E.C. Lance \cite{Lanc69}, C.A. Akemann, G.E. Elliott, G.K. Pedersen and J. Tomiyama \cite{AkEllPedTomi76}.

\section{Weak-2-local derivations on C$^*$-algebras of continuous functions}\label{sec: weak-2-local der on CoL}

In this section we begin by exploring the properties of weak-2-local maps. We shall illustrate this note with an example which shows that, in general, 2-local $\mathcal{S}$-maps and weak-2-local $\mathcal{S}$-maps are strictly different classes of maps. We begin with weak-local and local maps. Let $\mathcal{S}$ be a subset of $L(X,Y)$, where $X$ and $Y$ are Banach spaces. Following \cite{EssaPeRa16, EssaPeRa16b}, we say that a linear mapping $T: X\to Y$ is a weak-local $\mathcal{S}$ map (respectively, a local $\mathcal{S}$ map) if for each $x\in X$ and $\phi\in Y^*$ (respectively, for each $x\in X$) there exists $S_{x,\phi}\in \mathcal{S}$, depending on $x$ and $\phi$ (respectively, $S_{x}\in \mathcal{S}$, depending on $x$), such that $\phi T(x) = \phi S_{x,\phi} (x)$ (respectively, $T(x) = S_{x} (x)$).

\begin{proposition}\label{p weak-local and local do not coincide} Let $X$ and $Y$ be Banach spaces with $Y$ infinite dimensional. Suppose $F$ is a proper norm-dense subspace of $Y$. Let $\mathcal{S}$ be the set of all finite rank operators $S$ in $L(X,Y)$ such that $S(X) \subset F$. Then the local $\mathcal{S}$ maps are the linear maps from $X$ to $Y$ whose image is contained in $F$, while the set of weak-local $\mathcal{S}$ maps is the whole $L(X,Y)$.
\end{proposition}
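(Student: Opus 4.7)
The plan is to verify the two claims independently via direct rank-one constructions. I would split the argument into a characterization of local $\mathcal{S}$-maps and a characterization of weak-local $\mathcal{S}$-maps.

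For the local part, one implication is immediate: if $T$ is a local $\mathcal{S}$-map, the defining equality $T(x)=S_{x}(x)$ together with $S_x(X)\subset F$ forces $T(x)\in F$ for every $x\in X$, so $T(X)\subset F$. For the converse, take any linear $T:X\to Y$ with $T(X)\subset F$. For each $x\neq 0$ I would use Hahn–Banach to pick $\psi_x\in X^*$ with $\psi_x(x)=1$ and set $S_x(z):=\psi_x(z)\,T(x)$. This operator has rank at most one, is obviously bounded, satisfies $S_x(X)\subseteq \mathbb{K}\,T(x)\subset F$, and $S_x(x)=T(x)$, so $S_x\in\mathcal{S}$ and $T$ is a local $\mathcal{S}$-map. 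For $x=0$ just take $S_x=0$.

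For the weak-local part, I would fix an arbitrary $T\in L(X,Y)$, $x\in X$, and $\phi\in Y^*$, and try to produce $S\in\mathcal{S}$ with $\phi T(x)=\phi S(x)$. If $\phi=0$ (or $x=0$) one may take $S=0$. Otherwise the decisive observation is that \emph{$\phi(F)=\mathbb{K}$ whenever $\phi\neq 0$}: since $F$ is norm-dense and $\phi$ continuous, $\phi$ cannot vanish identically on $F$, and being a nonzero linear subspace of the scalar field $\phi(F)$ must equal $\mathbb{K}$. Therefore there exists $y_0\in F$ with $\phi(y_0)=\phi(T(x))$. Picking $\psi\in X^*$ with $\psi(x)=1$ and defining $S(z):=\psi(z)\,y_0$ gives a rank-one operator with $S(X)\subseteq\mathbb{K}\,y_0\subset F$ and $\phi S(x)=\phi(y_0)=\phi T(x)$, so $S\in\mathcal{S}$ works and $T$ is a weak-local $\mathcal{S}$-map. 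Since $T$ was arbitrary, every element of $L(X,Y)$ is weak-local $\mathcal{S}$.

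I do not expect a genuine obstacle here; the proposition is essentially a pedagogical example separating the local and weak-local notions. The only subtle point is the elementary fact that a nonzero continuous linear functional sends any norm-dense subspace of $Y$ onto the whole scalar field, which is what allows weak-local-ness to be exact rather than merely approximate. The hypothesis that $Y$ is infinite-dimensional is used only to ensure that proper norm-dense subspaces $F$ exist at all.
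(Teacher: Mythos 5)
Your proof is correct and follows essentially the same route as the paper's: the forward implication for local maps is immediate from $S_x(X)\subset F$, the converse uses the rank-one operator $T(x)\otimes\varphi_x$ with $\varphi_x(x)=1$, and the weak-local statement rests on the observation that a nonzero $\phi\in Y^*$ maps the dense subspace $F$ onto the scalar field, allowing an exact rank-one interpolant $u\otimes\varphi_x$ with $\phi(u)=\phi T(x)$. Your explicit handling of the cases $x=0$ and $\phi=0$ is a harmless (and slightly more careful) addition to what the paper writes.
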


\begin{proof} To prove the first statement, let $T: X\to Y$ be a linear local $\mathcal{S}$ map. For each $x\in X$, there exists $S_{x}\in \mathcal{S}$ such that $T(x)=S_{x} (x) \in F$, which proves that $T(X)\subset F$. Suppose now that $T: X\to Y$ is a linear map such that $T(X)\subset F$. For each $x\in X$, $T(x)\in F$. We take, via Hahn-Banach theorem, a functional $\varphi_{x}\in X^*$ satisfying $\varphi_x (x) =1$. The linear map $S_x = T(x)\otimes \varphi_x$ lies in $\mathcal{S}$ and $T(x) = S_{x} (x)$. We have therefore shown that $T$ is a local $\mathcal{S}$ map.\smallskip

For the second statement, let $T: X\to Y$ be a linear map, and let us fix $x\in X$ and $\phi \in Y^*$. If $\phi=0$, then $\phi T(x) = \phi S(x)=0$, for every $S\in \mathcal{S}$. We may therefore assume that $\phi\neq 0$. The density of $F$ in $Y$, implies that $\phi (F)\neq 0$ and hence $\phi (F) = \mathbb{C}$. Let us pick $u\in F$ such that $\phi (T(x)) = \phi(u)$. As before, take a functional $\varphi_{x}\in X^*$ satisfying $\varphi_x (x) =1$. The operator $S_x = u\otimes \varphi_x$ belongs to $\mathcal{S}$ and $\phi T(x) = \phi (u) = \phi S_{x} (x)$, which shows that $T$ is a weak-local $\mathcal{S}$ map.
\end{proof}

\begin{remark}\label{r weak-local is local}{\rm Let $\mathcal{S} = \hbox{Der} (A)$ be the set of all derivations on a C$^*$-algebra $A$. Theorem 3.4 in \cite{EssaPeRa16} (see also \cite{EssaPeRa16b}) proves that every weak-local $\hbox{Der} (A)$ map on $A$ lies in $\hbox{Der} (A)$. This phenomenon also holds for other sets $\mathcal{S}$, for example when $\mathcal{S}$ is the set of all triple derivations on a JB$^*$-triple (see \cite{BurCaPe16}). To provide an example in the setting of general Banach spaces, let $X$ and $Y$ be Banach spaces with $Y$ infinite dimensional, and let $F$ be a closed proper subspace of $Y$. We set $$\mathcal{S}:= \{ S\in L(X,Y) : S\hbox{ has finite rank and } S(X)\subset F\}.$$ We claim that, under these hypothesis, every weak-local $\mathcal{S}$ map is a local $\mathcal{S}$ map. Arguing by contradiction, we suppose that $T:X\to Y$ is a weak-local $\mathcal{S}$ map which is not a local $\mathcal{S}$ map. Then there exists $x\in X$ such that $T(x) \neq S(x)$, for every $S\in \mathcal{S}$. We observe that $F= \{ S(x) : S\in \mathcal{S}\}$, and hence $T(x)\notin F$. Since $F$ is a closed subspace, we can find, via Hahn-Banach theorem, a functional $\phi\in Y^*$ satisfying $\phi T(x) =1$ and $\phi S(x) =0$, for every $S\in \mathcal{S}$, which contradicts that $T$ is a weak-local $\mathcal{S}$ map. In the latter case, the (weak-)local $\mathcal{S}$ maps are precisely the linear maps $T\in L(X,Y)$ such that $T(X)\subset F$.
}\end{remark}

We can also prove the existence of weak-2-local $\mathcal{S}$ maps which are not 2-local $\mathcal{S}$ maps.

\begin{proposition}\label{p weak-2-local and 2-local do not coincide} Let $X$ and $Y$ be Banach spaces with $Y$ infinite dimensional. Suppose $F$ is a proper norm-dense subspace of $Y$. Let $\mathcal{S}$ be the set of all finite rank operators $S$ in $L(X,Y)$ such that $S(X) \subset F$. Then the 2-local $\mathcal{S}$ maps are the $1$-homogeneous maps from $X$ to $Y$ whose image is contained in $F$, while the set of weak-2-local $\mathcal{S}$ maps is the set of all $1$-homogeneous maps from $X$ to $Y$.
\end{proposition}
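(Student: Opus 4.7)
The plan mirrors Proposition \ref{p weak-local and local do not coincide} adapted to the 2-local setting. For necessity, fix $x\in X$ and $\alpha\in \mathbb{C}$, and apply the weak-2-local hypothesis to the pair $(x,\alpha x)$ with an arbitrary $\phi\in Y^*$: the operator $T=T_{x,\alpha x,\phi}\in \mathcal{S}$ is linear, so $\phi\Delta(\alpha x)=\phi T(\alpha x)=\alpha\phi T(x)=\alpha\phi\Delta(x)$, and letting $\phi$ range over $Y^*$ gives $\Delta(\alpha x)=\alpha\Delta(x)$. In particular $\Delta(0)=0$. The same argument without the functional shows that a 2-local $\mathcal{S}$ map is 1-homogeneous, and setting $y=x$ in the 2-local condition yields $\Delta(x)=T_{x,x}(x)\in F$, so 2-local maps automatically take values in $F$.

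For the converse in the 2-local statement, let $\Delta:X\to Y$ be 1-homogeneous with $\Delta(X)\subset F$, and fix $x,y\in X$. If $x$ and $y$ are linearly independent, Hahn-Banach supplies $\varphi_x,\varphi_y\in X^*$ biorthogonal to $\{x,y\}$, and the rank-two operator $T_{x,y}=\Delta(x)\otimes \varphi_x+\Delta(y)\otimes \varphi_y$ lies in $\mathcal{S}$ (its range is contained in $\text{span}\{\Delta(x),\Delta(y)\}\subset F$) and satisfies $T_{x,y}(x)=\Delta(x)$, $T_{x,y}(y)=\Delta(y)$. If $x,y$ are linearly dependent, pick the non-zero vector among them---say $x\neq 0$ (the case $x=y=0$ is handled by $T_{x,y}=0\in \mathcal{S}$)---write $y=\alpha x$, and take $T_{x,y}=\Delta(x)\otimes \varphi_x$ with $\varphi_x(x)=1$; 1-homogeneity of $\Delta$ then gives $T_{x,y}(y)=\alpha \Delta(x)=\Delta(\alpha x)=\Delta(y)$.

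For the weak-2-local converse, fix $x,y\in X$ and $\phi\in Y^*$. If $\phi=0$ any $T\in \mathcal{S}$ works, e.g. $T=0$. Otherwise $\phi$ is non-zero on $Y$, so $\phi(Y)=\mathbb{C}$; since $F$ is dense in $Y$ and $\phi$ is continuous, $\phi(F)$ is a dense $\mathbb{C}$-subspace of $\mathbb{C}$ and therefore equals $\mathbb{C}$. Choose $u,v\in F$ with $\phi(u)=\phi\Delta(x)$ and $\phi(v)=\phi\Delta(y)$ and repeat the rank-two construction with $u,v$ in place of $\Delta(x),\Delta(y)$; the linearly dependent case is handled as before, replacing $\Delta(x)$ by an $u\in F$ with $\phi(u)=\phi\Delta(x)$ and invoking 1-homogeneity of $\Delta$ to cover the second vector.

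The only real subtlety is the bookkeeping around linearly dependent pairs and the zero vector, where one must use $\Delta(0)=0$ (which itself comes from 1-homogeneity) to ensure the rank-one interpolants stay consistent with $\Delta$. Apart from that, the proof is a direct Hahn-Banach construction together with the density of $F$ in $Y$ to enforce the functional identity rather than the pointwise one.
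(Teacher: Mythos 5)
Your proof is correct and follows essentially the same route as the paper: the necessity half is the standard $1$-homogeneity argument (which the paper delegates to \cite[Lemma 2.1]{CaPe2016}) plus the observation that $T_{x,x}(x)\in F$, and the sufficiency half is the same Hahn--Banach construction of rank-one/rank-two interpolants, using the density of $F$ to solve $\phi(u)=\phi\Delta(x)$ in the weak case. The only difference is that you write out explicitly the steps the paper dismisses as ``not hard to see,'' including the careful treatment of linearly dependent pairs and the zero vector, which is a harmless (indeed welcome) elaboration.
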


\begin{proof} As claimed in the introduction of \cite{JorPe}, it is not hard to see that every $1$-homogeneous map $\Delta : X\to Y$ with $\Delta(X)\subset F$ is a 2-local $\mathcal{S}$ map. Clearly, every 2-local $\mathcal{S}$ map $\Delta: X\to Y$ is $1$ homogeneous (cf. \cite[Lemma 2.1]{CaPe2016}) and satisfies $\Delta (X) \subset F$.\smallskip

Suppose now that $\Delta : X\to Y$ is a $1$-homogeneous map. Pick $x,y\in X$ and $\phi\in Y^*$. We can assume $\phi\neq 0$, otherwise $\phi \Delta (x) = \phi S(x) = 0 =\phi \Delta (y) = \phi S(y)$, for every $S\in \mathcal{S}$. We deduce, from the norm-density of the subspace $F$ in $Y$ and the continuity of $\phi\neq 0$, that $\phi (F)= \mathbb{C}.$  We assume first that $y = \alpha x$ for certain $\alpha\in \mathbb{C}$.  Since $\Delta$ is $1$-homogeneous, we have $\Delta(y) = \alpha \Delta (x)$. Take $\varphi_x\in X^*$ satisfying $\varphi_x (x) = 1$. Since $\phi (F)= \mathbb{C},$ we can pick $m\in F$ such that $\phi (m) = \phi \Delta (x)$. The linear operator $S = m\otimes \varphi_x$ lies in $\mathcal{S}$ and satisfies $$\phi S(x) = \phi (m) = \phi \Delta (x), \hbox{ and }\phi S(y) = \alpha \phi S(x)  = \alpha \phi \Delta (x) = \phi \Delta (y).$$

We assume now that $x$ are linearly independent. Let us choose $\varphi_x, \varphi_y\in X^*$ satisfying $\varphi_x (x) = 1$, $\varphi_x (y) =0$, $\varphi_y (y) =1$ and $\varphi_y (x)=0$. Since $\phi (F) = \mathbb{C}$, we can choose $m,n\in F$ such that $\phi (m) = \phi \Delta(x)$ and $\phi (n) = \phi \Delta (y)$. The linear mapping $S = m\otimes \varphi_x + n\otimes \varphi_y$ belongs to $\mathcal{S}$ and clearly $\phi \Delta (x) = \phi S(x)$ and $\phi \Delta (y) = \phi S(y)$, which finishes the proof.
\end{proof}

In what follows we shall revisit some properties which are essentially inherent to derivations and weak-2-local derivations on C$^*$-algebras of continuous functions. We begin with the case a of a derivation on $C(\Omega, A),$ where $\Omega$ is a compact Hausdorff space and $A$ is a C$^*$-algebra. This case is probably part of the folklore in the theory of derivations, we include here an sketch of the arguments for completeness.\smallskip

Following the notation in \cite{JorPe}, given $t\in \Omega$, $\delta_{t} : C(\Omega,A) \to A$ will denote the $^*$-homomorphism defined by $\delta_t (X) = X(t)$. We observe that the space $C(\Omega,A)$ also is a Banach $A$-bimodule with products $(a X) (t) = a X(t)$ and $(X a) (t) = X(t) a$, for every $a\in A$, $X\in C(\Omega,A)$, and the mapping $\delta_{t} : C(\Omega,A) \to A$ is an $A$-module homomorphism.\smallskip

Henceforth, each element $a$ in $A$, the symbol $\Gamma (a)= 1\otimes a = \widehat{a}$ will denote the constant function with value $a$ from $\Omega$ into $A$. The mapping $\Gamma: A \to C(\Omega, A)= C(\Omega) \otimes A$, $a\mapsto \Gamma (a)$,  is an $A$-module homomorphism.\smallskip

We begin with a new technical lemma.

\begin{lemma}\label{l evaluation at positive functionals} Let $D: A\to A$ be a derivation on a C$^*$-algebra. Suppose $\varphi$ is a positive functional on $A$ and $a$ is an element in $A$ such that $\varphi (a a^* + a^* a) =0$. Then $\varphi D(a)=0$. Consequently, if $\Delta:A \to A$ is a weak-2-local derivation we have $\varphi \Delta (a) =0$ for every $a$ and $\varphi$ as above.
\end{lemma}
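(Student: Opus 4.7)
The plan is to reduce $\varphi(D(a)) = 0$ to Cauchy--Schwarz after representing $D$ as an inner derivation on the enveloping von Neumann algebra. Since $aa^*, a^*a \in A_+$ and $\varphi$ is positive, the hypothesis $\varphi(aa^*+a^*a) = 0$ forces $\varphi(aa^*) = \varphi(a^*a) = 0$. The Cauchy--Schwarz inequality for the positive semi-definite sesquilinear form $(x,y)\mapsto \varphi(y^*x)$ then gives, for every $y\in A$,
\[
|\varphi(ya)|^2 \leq \varphi(yy^*)\varphi(a^*a) = 0, \qquad |\varphi(ay)|^2 \leq \varphi(aa^*)\varphi(y^*y) = 0,
\]
so $\varphi(ya) = \varphi(ay) = 0$ for every $y \in A$. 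Since $a^*a, aa^*\in A$, the same identities persist for the canonical normal extension $\tilde\varphi$ of $\varphi$ to $A^{**}$: $\tilde\varphi(za) = \tilde\varphi(az) = 0$ for every $z\in A^{**}$.

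To feed $D$ into this, I would extend $D$ to a $\sigma$-weakly continuous derivation $\tilde D : A^{**} \to A^{**}$ (using the automatic norm continuity of derivations on C$^*$-algebras together with the separate $\sigma$-weak continuity of multiplication on $A^{**}$, which makes the double transpose $D^{**}$ a derivation) and then invoke Sakai's inner derivation theorem \cite[Theorem 4.1.6]{S} to write $\tilde D = \hbox{ad}_z$ for some $z\in A^{**}$. The desired identity then reads
\[
\varphi(D(a)) = \tilde\varphi\bigl(\tilde D(a)\bigr) = \tilde\varphi(za) - \tilde\varphi(az) = 0.
\]
For the weak-$2$-local statement, I would apply the defining property of $\Delta$ to the pair $(a,0)$ and the positive functional $\varphi\in A^*$ to produce a derivation $D = T_{a,0,\varphi}\in \hbox{Der}(A)$ with $\varphi\Delta(a) = \varphi D(a)$; the first part then yields $\varphi\Delta(a) = 0$.

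The only non-trivial step I anticipate is the extension that turns $D$ into an inner derivation on $A^{**}$; once this is in hand, the remainder is two lines of Cauchy--Schwarz. A naive approach that stays inside $A$ and relies only on an approximate identity $(e_\lambda)$ together with the Leibniz identity $D(e_\lambda a) = D(e_\lambda) a + e_\lambda D(a)$ collapses to a tautology: after killing $\varphi(D(e_\lambda)a) = 0$ via the first Cauchy--Schwarz bound one is left with $\varphi(D(e_\lambda a)) = \varphi(e_\lambda D(a))$, and both sides converge to $\varphi(D(a))$ without yielding any useful gain. Passing to a larger algebra where $D$ becomes inner thus appears essential.
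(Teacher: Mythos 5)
Your argument is correct, but it reaches the conclusion by a genuinely different route than the paper. You share the opening move: positivity splits the hypothesis into $\varphi(aa^*)=\varphi(a^*a)=0$, and Cauchy--Schwarz kills $\varphi(ya)$ and $\varphi(ay)$ for all $y\in A$. From there you go \emph{up}: pass to the normal extension $\tilde\varphi$ on $A^{**}$, note that $D^{**}$ is a weak$^*$-continuous derivation on the von Neumann algebra $A^{**}$, invoke Sakai's inner derivation theorem to write $D^{**}=\hbox{ad}_z$ with $z\in A^{**}$, and conclude $\varphi D(a)=\tilde\varphi(za)-\tilde\varphi(az)=0$; each of these steps is sound (the paper itself uses the ``$D^{**}$ is a derivation'' device in Lemma \ref{l C0(LM_n) has the inner derivation property}). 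The paper instead stays \emph{inside} $A$: after upgrading the Cauchy--Schwarz identities to $\varphi(bz)=\varphi(zb)=0$ for every $b$ in the C$^*$-subalgebra $B$ generated by $a$ (via polynomials in $a,a^*$ with zero constant term and continuity), it applies the Cohen factorization theorem to write $a=bc$ with $b,c\in B$, so that the Leibniz rule gives $\varphi D(a)=\varphi(D(b)c)+\varphi(bD(c))=0$ directly. Your version buys brevity at the price of Sakai's innerness theorem; the paper's version avoids any structure theory of von Neumann algebras at the price of Cohen factorization, and your closing remark correctly diagnoses why the naive approximate-identity computation is circular -- the factorization $a=bc$ (rather than $a=\lim e_\lambda a$) is exactly what breaks that circle without leaving $A$. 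Your reduction of the weak-$2$-local statement to the derivation case via the pair $(a,0)$ and the functional $\varphi$ is also correct.
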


\begin{proof} Under the hypothesis of the Lemma, it follows from the Cauchy-Schwarz inequality that $|\varphi (a z)|^2 \leq \varphi(a a^*) \varphi(z^* z) =0$, and hence $\varphi (az )=0$, for every $z\in A$.  Similarly, we get $\varphi (a^* z)=0$, for every $z\in A$. So, if $p(\lambda,\mu)$ is a complex polynomial in two variables with zero constant term, we have $\varphi (p(a,a^*) z)=0$, for every $z\in A$. Let $B$ denote the C$^*$-subalgebra of $A$ generated by $a$ (we observe that $B$ need not be commutative). The continuity of $\varphi$ and the previous arguments show that $\varphi (bz )=0$, for every $b\in B$, $z\in A$. Similar ideas are valid to prove that $\varphi (z b)=0$, for every $b\in B$, $z\in A$.\smallskip

Since $B$ is a C$^*$-algebra, and hence it contains an approximate unit, we deduce from the Cohen factorization theorem (cf. \cite[Theorem VIII.32.22]{HewRoss}) the existence of $b,c\in B$ such that $a = bc$. The statement proved in the above paragraph shows that $$\varphi D(a) = \varphi (D(b) c + b D(c)) =0,$$ as we wanted. The rest is clear.
\end{proof}

Let $L$ denote a locally compact Hausdorff space, and let $A$ be a C$^*$-algebra. According to the terminology introduced above, given $t\in L$, $\delta_{t} : C_0(L,A) \to A$ will denote the $^*$-homomorphism defined by $\delta_t (X) = X(t)$. The space $C_0(L,A)$ also is a Banach $A$-bimodule with products $(a X) (t) = a X(t)$ and $(X a) (t) = X(t) a$, for every $a\in A$, $X\in C_0(L,A)$, and the mapping $\delta_{t} : C_0(L,A) \to A$ is an $A$-module homomorphism.\smallskip

Let $K$ and $O$ be subsets of $L$ with $K\subseteq O\neq L$, $K$ compact, and $O$ open. Pick, via Urysohn's lemma, a continuous function $f_{K,O}\in C_0(L)$ with $0\leq f_{K,O}\leq 1$, $f_{K,O}|_{K}\equiv 1$ and $f_{K,O}|_{L\backslash O} \equiv 0$. For each $a$ in $A$, the symbol $\Gamma_{f_{K,O}} (a)= f_{K,O}\otimes a $ will denote the continuous function on $L$ mapping each $t$ to $f_{K,O}(t) a$. The mapping $\Gamma_{{f_{K,O}}}: A \to C_0(L, A)= C_0(L) \otimes A$, $a\mapsto\Gamma_{{f_{K,O}}} (a)$,  is an $A$-module homomorphism.\smallskip

Suppose $\Delta:C_0(L,A)\to C_0(L,A)$ is a weak-2-local derivation. Let $Y$ be an element in $C_0(L, A)$ satisfying $Y(t_0)=0$ for some $t_0\in L$. Let us fix a positive functional $\varphi\in A^*$ and consider the positive functional $\varphi\otimes \delta_{t_0}$ on $C_0(L,A)$ defined by $(\varphi\otimes \delta_{t_0}) (X) = \varphi (X (t_0))$ ($X\in C_0(L,A)$). Clearly $(\varphi\otimes \delta_{t_0}) (Y^* Y + Y Y^*) =0$. Lemma \ref{l evaluation at positive functionals} affirms that $(\varphi\otimes \delta_{t_0}) \Delta (Y) = 0,$ and since $\varphi$ was arbitrarily chosen, we have $\Delta (Y) (t_0)=0.$ We gather this information in the next lemma.

\begin{lemma}\label{l weak-2-local derivation anihilates on deltat} Let $\Delta:C_0(L,A)\to C_0(L,A)$ be a weak-2-local derivation. Suppose $Y$ is an element in $C_0(L, A)$ satisfying $Y(t_0)=0$ for some $t_0\in L$. Then $\Delta (Y) (t_0)=0.$ In particular, if $K$ and $O$ are subsets of $L$ with $K\subseteq O\neq L$, $K$ compact, and $O$ open, and $f_{K,O}\in C_0(L)$ with $0\leq f_{K,O}\leq 1$, $f_{K,O}|_{K}\equiv 1$ and $f_{K,O}|_{L\backslash O} \equiv 0$, we have $$ \Delta (X) (t) =\delta_t \Delta \Gamma_{{f_{K,O}}} \delta_t (X),$$ for every $t\in K$, and every $X\in C_0(L,A)$.
\end{lemma}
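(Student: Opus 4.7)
My plan is to verify the two assertions separately. For the first claim, I would simply formalize the paragraph immediately preceding the lemma statement: fix any positive $\varphi\in A^{*}$, form the positive functional $\varphi\otimes \delta_{t_0}$ on $C_0(L,A)$, observe
\[
(\varphi\otimes\delta_{t_0})(Y^*Y+YY^*)=\varphi\!\left(Y(t_0)^*Y(t_0)+Y(t_0)Y(t_0)^*\right)=0,
\]
and apply Lemma \ref{l evaluation at positive functionals} directly to the weak-2-local derivation $\Delta$ to conclude $(\varphi\otimes\delta_{t_0})\Delta(Y)=0$. Since positive functionals on $A$ span $A^{*}$, this would force $\Delta(Y)(t_0)=0$.

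For the ``in particular'' part, I would fix $t\in K$ and $X\in C_0(L,A)$, and set $Z:=\Gamma_{f_{K,O}}\delta_t(X)=f_{K,O}\otimes X(t)$. Because $f_{K,O}(t)=1$, one has $Z(t)=X(t)$, hence $(X-Z)(t)=0$. The naive idea of applying the first part to $X-Z$ fails because $\Delta$ is only weak-2-local, not linear, and in particular $\Delta(X)-\Delta(Z)$ need not equal $\Delta(X-Z)$. The key move is to linearize via the defining property: for each positive $\varphi\in A^{*}$, I would apply the weak-2-local condition to the triple $(X,Z,\varphi\otimes\delta_t)$ to obtain a genuine (linear) derivation $D$ on $C_0(L,A)$ with
\[
(\varphi\otimes\delta_t)\Delta(X)=(\varphi\otimes\delta_t)D(X),\qquad (\varphi\otimes\delta_t)\Delta(Z)=(\varphi\otimes\delta_t)D(Z).
\]

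Now $D$ is linear, so $D(X)-D(Z)=D(X-Z)$, and Lemma \ref{l evaluation at positive functionals}, applied to the derivation $D$, the positive functional $\varphi\otimes\delta_t$, and the element $X-Z$ (which satisfies $(X-Z)(t)=0$), yields $(\varphi\otimes\delta_t)D(X-Z)=0$. Combining this with the displayed identities would give $\varphi(\Delta(X)(t))=\varphi(\Delta(Z)(t))$; letting $\varphi$ range over the positive part of $A^{*}$ then forces $\Delta(X)(t)=\Delta(Z)(t)=\delta_t\Delta\,\Gamma_{f_{K,O}}\delta_t(X)$.

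The only real obstacle I foresee is the non-linearity of $\Delta$: one cannot transfer the vanishing $(X-Z)(t)=0$ directly into a vanishing statement for $\Delta(X-Z)$. Overcoming this amounts to exploiting the \emph{pair} feature of weak-2-locality to replace $\Delta$, in the scalar sense relevant to $\varphi\otimes\delta_t$, by a genuine derivation $D$ that matches it simultaneously at $X$ and $Z$; once inside the linear world, Lemma \ref{l evaluation at positive functionals} takes over and does all the work.
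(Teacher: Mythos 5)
Your proposal is correct and follows essentially the same route as the paper: the first claim is exactly the argument preceding the lemma (positive functional $\varphi\otimes\delta_{t_0}$ annihilates $Y^*Y+YY^*$, then Lemma \ref{l evaluation at positive functionals}), and the second claim is obtained, just as in the paper, by applying weak-2-locality to the pair $X$, $\Gamma_{f_{K,O}}(X(t))$ and the functional $\varphi\otimes\delta_t$ and then using the first statement for the resulting genuine derivation $D$. The only cosmetic difference is that you work with positive $\varphi$ and invoke that they span $A^*$, whereas the paper uses arbitrary $\varphi$ and closes with Hahn--Banach.
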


\begin{proof} The first statement has been proved in the comments preceding the lemma. Fix an arbitrary functional $\varphi\in A^*$, $X$, $f_{K,O}\in C_0(L)$ and $t$ as in the hypothesis. By assumptions, there exists a derivation $D$ on $C_0(L,A),$ depending on $(\varphi\otimes \delta_{t})$, $X$ and $\Gamma_{{f_{K,O}}} (X(t)),$ such that $$\varphi (\Delta (X) (t)) = (\varphi\otimes \delta_{t}) \Delta (X) = (\varphi\otimes \delta_{t}) D(X) = \varphi (D(X) (t)),$$ and $$\varphi (\Delta (\Gamma_{{f_{K,O}}} (X(t))) (t)) =  \varphi (D(\Gamma_{{f_{K,O}}} (X(t))) (t)).$$ By the first statement, which is of course valid for derivations, we have $$D (X) (t) =D (\Gamma_{{f_{K,O}}} (X(t))) (t),$$ and hence $\varphi (\Delta (X) (t) -\Delta (\Gamma_{{f_{K,O}}} (X(t))) (t))=0$. Finally, the Hahn-Banach theorem gives the desired statement.
\end{proof}

When $\Omega$ is a compact Hausdorff space. We can always replace in Lemma \ref{l weak-2-local derivation anihilates on deltat} the function $\Gamma_{{f_{K,O}}}$ with $\Gamma$ to prove that for every weak-2-local derivation $\Delta$ on $C(\Omega,A)$ we have $$ \Delta (X) (t) =\delta_t \Delta \Gamma \delta_t (X) ,$$ for every $t\in \Omega$, $X\in C_0(L,A)$. As a consequence, hypothesis $(b)$ in \cite[Theorem 2.13]{JorPe} can be relaxed. We can state now a generalization of the just quoted theorem for continuous functions on a locally compact Hausdorff space.

\begin{theorem}\label{t pattern} Let $L$ be a locally compact Hausdorff space. Suppose $A$ is a C$^*$-algebra such that every weak-2-local derivation on $A$ is a {\rm(}linear{\rm)} derivation. Then every weak-2-local derivation on $C_0(L,A)$ is a {\rm(}linear{\rm)} derivation.
\end{theorem}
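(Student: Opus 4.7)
I would reduce the problem to the hypothesis on $A$ by producing, for each $t\in L$, a map $\Delta_t\colon A\to A$ which is a weak-2-local derivation on $A$ and which recovers $\Delta$ in the sense that $\Delta(X)(t)=\Delta_t(X(t))$ for every $X\in C_0(L,A)$. Concretely, for each $t\in L$ I would choose, via local compactness, a compact set $K$ with $t\in K$ and an open set $O$ with $K\subseteq O\neq L$ (when $L$ is compact one instead uses $\Gamma$, as remarked after Lemma \ref{l weak-2-local derivation anihilates on deltat}), together with an Urysohn function $f_{K,O}$, and set $\Delta_t(a):=\Delta(\Gamma_{f_{K,O}}(a))(t)$. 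The second part of Lemma \ref{l weak-2-local derivation anihilates on deltat} ensures that $\Delta_t(a)=\Delta(X)(t)$ for every $X\in C_0(L,A)$ with $X(t)=a$, so $\Delta_t$ is well-defined independently of the auxiliary choices and satisfies $\Delta(X)(t)=\Delta_t(X(t))$.

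The central step is to verify that every $\Delta_t$ is a weak-2-local derivation on $A$. Given $a,b\in A$ and $\psi\in A^{*}$, set $X:=\Gamma_{f_{K,O}}(a)$, $Y:=\Gamma_{f_{K,O}}(b)$, and apply weak-2-locality of $\Delta$ against the functional $\psi\otimes\delta_t\in C_0(L,A)^{*}$; this produces a derivation $D$ on $C_0(L,A)$ with $\psi(\Delta(X)(t))=\psi(D(X)(t))$ and $\psi(\Delta(Y)(t))=\psi(D(Y)(t))$. Define $d\colon A\to A$ by $d(c):=D(\Gamma_{f_{K,O}}(c))(t)$; this is manifestly linear and satisfies $\psi\Delta_t(a)=\psi d(a)$ and $\psi\Delta_t(b)=\psi d(b)$, so everything reduces to showing that $d$ is a derivation. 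This is the main obstacle, because $\Gamma_{f_{K,O}}$ is not multiplicative. The resolution is that
\[
\Gamma_{f_{K,O}}(ab)-\Gamma_{f_{K,O}}(a)\Gamma_{f_{K,O}}(b) = (f_{K,O}-f_{K,O}^{2})\otimes ab
\]
lies in $C_0(L,A)$ and vanishes at $t$ (since $f_{K,O}(t)=1$). The first part of Lemma \ref{l weak-2-local derivation anihilates on deltat}, which applies in particular to the derivation $D$, then gives $D(\Gamma_{f_{K,O}}(ab))(t)=D(\Gamma_{f_{K,O}}(a)\Gamma_{f_{K,O}}(b))(t)$, and expanding the right-hand side with the Leibniz rule for $D$ and using $f_{K,O}(t)=1$ again yields $d(ab)=d(a)b+a\,d(b)$.

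Once each $\Delta_t$ is a weak-2-local derivation, the hypothesis on $A$ makes it a linear derivation. Linearity of every $\Delta_t$ together with the identity $\Delta(X)(t)=\Delta_t(X(t))$ implies at once that $\Delta$ is linear on $C_0(L,A)$, and the Leibniz rule for each $\Delta_t$ gives
\[
\Delta(XY)(t)=\Delta_t(X(t)Y(t))=\Delta_t(X(t))Y(t)+X(t)\Delta_t(Y(t))=(\Delta(X)Y+X\Delta(Y))(t)
\]
for all $X,Y\in C_0(L,A)$ and $t\in L$, so $\Delta$ is a derivation, completing the proof.
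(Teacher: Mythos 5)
Your proof is correct and follows the same strategy as the paper: the heart of both arguments is showing that $\delta_t\Delta\Gamma_{f_{K,O}}$ is a weak-2-local derivation on $A$, which in turn reduces to checking that $\delta_t D\Gamma_{f_{K,O}}$ is a derivation on $A$ for every derivation $D$ on $C_0(L,A)$, using Lemma \ref{l weak-2-local derivation anihilates on deltat}. Two points differ, both harmlessly and arguably to your advantage. First, to handle the non-multiplicativity of $\Gamma_{f_{K,O}}$ you observe that the Leibniz defect $(f_{K,O}-f_{K,O}^2)\otimes ab$ vanishes at $t$, whereas the paper writes $f_{K,O}=g_{K,O}^2$ and compares $D(g_{K,O}\otimes a)(t)$ with $D(f_{K,O}\otimes a)(t)$; these are interchangeable uses of the same lemma. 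Second, and more substantively, the paper only proves linearity of $\Delta$ and then invokes Theorem 3.4 of \cite{EssaPeRa16} (weak-local derivations are derivations) to conclude, while you extract the Leibniz rule for $\Delta$ directly from the pointwise factorization $\Delta(X)(t)=\Delta_t(X(t))$ through the linear derivations $\Delta_t$. This makes your ending self-contained and avoids the external citation, at no extra cost.
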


\begin{proof} Let $\Delta : C_0(L,A) \to C_0(L,A)$ be a weak-2-local derivation. Having in mind Theorem 3.4 in \cite{EssaPeRa16} (see also \cite{EssaPeRa16b}) we deduce that it is enough to prove that $\Delta$ is linear. We shall show that $\delta_t \Delta$ is linear for every $t\in L$.\smallskip

Let us pick $t_0\in L$. Let $K$ and $O$ be arbitrary subsets of $L$ with $K\subseteq O\neq L$, $K$ compact, $t_0\in K$, and $O$ open, and let $f_{K,O}\in C_0(L)$ be an arbitrary function with $0\leq f_{K,O}\leq 1$, $f_{K,O}|_{K}\equiv 1$ and $f_{K,O}|_{L\backslash O} \equiv 0$. Lemma \ref{l weak-2-local derivation anihilates on deltat} assures that $$ \Delta (X) (t) =\Delta (\Gamma_{{f_{K,O}}} (X(t))) (t),$$ for every $t\in K$, $X\in C_0(L,A)$. Fix $K$, $O$ and $f_{K,O}$ satisfying the above properties.\smallskip

The mapping $\delta_{t_0} : C_0(L,A) \to A$ is a continuous $A$-module homomorphism. Unfortunately, the operator $\Gamma_{{f_{K,O}}}: A\to C_0(L,A)$ need not be a homomorphism, however, it satisfies $\Gamma_{{f_{K,O}}} (a) A = a A$ and $A \Gamma_{{f_{K,O}}} (a) = A a$, for every $a\in A$, and every $A\in C_0(L,A)$. We cannot apply Lemma 2.3$(c)$ in \cite{JorPe} to prove that $\delta_{t_0} \Delta \Gamma_{{f_{K,O}}} : A\to A$ is a weak-2-local derivation. In this case, a more elaborated argument is needed. We shall show next that \begin{equation}\label{eq weak2der with Gammafko} \delta_{t_0} \Delta \Gamma_{{f_{K,O}}} : A\to A
 \end{equation}is a weak-2-local derivation. For this purpose, we pick a derivation $D $ on $C_0(L,A)$ and we shall show that $\delta_{t_0} D \Gamma_{{f_{K,O}}}$ is a derivation. To simplify notation let $g_{K,O} = f_{K,O}^{\frac12}\in C_0(L)$. Given $a,b\in A$, Lemma \ref{l weak-2-local derivation anihilates on deltat} implies that $$ \delta_{t_0} D \Gamma_{{f_{K,O}}} ( a b) = \delta_{t_0} D ({f_{K,O}}\otimes (a b)) =  \delta_{t_0} D (({g_{K,O}}\otimes a ) ({g_{K,O}}\otimes b )) $$ $$= \delta_{t_0} ( D (({g_{K,O}}\otimes a ))  ({g_{K,O}}\otimes b )) + \delta_{t_0} (({g_{K,O}}\otimes a ) D (({g_{K,O}}\otimes b )))$$ $$= \delta_{t_0} ( D (({g_{K,O}}\otimes a )))\ b + a\ \delta_{t_0} ( D (({g_{K,O}}\otimes b )))$$ $$ = \delta_{t_0} ( D (({f_{K,O}}\otimes a )))\ b + a\ \delta_{t_0} ( D (({f_{K,O}}\otimes b )))= (\delta_{t_0} D \Gamma_{{f_{K,O}}}) (a)\ b + a\ (\delta_{t_0} D \Gamma_{{f_{K,O}}}) (b),$$ which proves what we desired.\smallskip

To prove \eqref{eq weak2der with Gammafko} we observe that given $a,b\in A$, and a functional $\varphi\in A^*$, by the hypothesis on $\Delta$ there exists a derivation $D$ on $C_0(L,A)$, depending on $\Gamma_{{f_{K,O}}} (a)$, $\Gamma_{{f_{K,O}}} (b)$, and the functional $\varphi\otimes \delta_{t_0}\in C_0(L,A)^*$, such that $$\varphi (\delta_{t_0} \Delta  \Gamma_{{f_{K,O}}}) (a)= (\varphi\otimes \delta_{t_0}) \Delta  \Gamma_{{f_{K,O}}} (a) = (\varphi\otimes \delta_{t_0}) D  \Gamma_{{f_{K,O}}} (a)= \varphi (\delta_{t_0} D  \Gamma_{{f_{K,O}}}) (a)$$ and $$\varphi (\delta_{t_0} \Delta  \Gamma_{{f_{K,O}}}) (b)= (\varphi\otimes \delta_{t_0}) \Delta  \Gamma_{{f_{K,O}}} (b) = (\varphi\otimes \delta_{t_0}) D  \Gamma_{{f_{K,O}}} (b)= \varphi (\delta_{t_0} D  \Gamma_{{f_{K,O}}}) (b),$$ witnessing the desired conclusion because $\delta_{t_0} D  \Gamma_{{f_{K,O}}}$ is a derivation on $A$.\smallskip

By the hypothesis on $A$ and \eqref{eq weak2der with Gammafko}, $\delta_{t_0} \Delta \Gamma_{{f_{K,O}}} $ is a linear derivation, and thus $$ \delta_{t_0} \Delta \Gamma_{{f_{K,O}}} (X(t_0)+Y(t_0)) = \delta_{t_0} \Delta \Gamma_{{f_{K,O}}} (X(t_0)) +\delta_{t_0} \Delta \Gamma_{{f_{K,O}}} (Y(t_0)),$$ for every $X,Y$ in $C_0(L,A)$. Lemma \ref{l weak-2-local derivation anihilates on deltat} implies that $$\delta_{t_0} \Delta (X+Y) = \delta_{t_0} \Delta \Gamma_{{f_{K,O}}} (X(t_0)+Y(t_0))  $$ $$= \delta_{t_0} \Delta \Gamma_{{f_{K,O}}} (X(t_0)) +\delta_{t_0} \Delta \Gamma_{{f_{K,O}}} (Y(t_0))= \delta_{t_0} \Delta (X)+ \delta_{t_0} \Delta (Y),$$ for every $X,Y\in C_0(L,A)$. It follows from the arbitrariness of $t_0$ that $\Delta$ is linear.\end{proof}

Corollaries 3.5 and 3.6 in \cite{CaPe2016} assert that every weak-2-local derivation on a dual von Neumann algebra or on a compact C$^*$-algebra is a linear derivation. Combining these results with Theorem \ref{t pattern} we deduce our main result on weak-2-local derivations on $C_0(L,B(H))$.

\begin{theorem}\label{t weak2local derivations C0LB(H)} Let $M$ be an atomic von Neumann algebra or a compact C$^*$-algebra, and let $L$ a locally compact Hausdorff space. Then every weak-2-local derivation on $C_0(L,M)$ is a linear derivation. In particular, for every complex Hilbert space $H$, every weak-2-local derivation on $C_0 (L,B(H))$ or on  $C_0(L,K(H))$ is a linear derivation.$\hfill\Box$
\end{theorem}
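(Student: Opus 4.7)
The plan is to apply Theorem \ref{t pattern} directly, feeding in the hypothesis on the coefficient algebra $A = M$ from the weak-2-local theory already developed in \cite{CaPe2016}. First I would invoke Corollaries 3.5 and 3.6 of \cite{CaPe2016} (explicitly quoted in the paragraph preceding the statement), which assert that every weak-2-local derivation on an atomic von Neumann algebra, and every weak-2-local derivation on a compact C$^*$-algebra, is a linear derivation. This is precisely the standing hypothesis on $A$ required by Theorem \ref{t pattern}. Setting $A := M$, Theorem \ref{t pattern} then immediately delivers the conclusion that every weak-2-local derivation on $C_0(L,M)$ is a (linear) derivation.

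For the ``in particular'' clause, I would note that for any complex Hilbert space $H$ the algebra $B(H)$ is an atomic von Neumann algebra (the identity is the supremum of the family of rank-one projections, which are minimal), so the first case applies with $M=B(H)$; and $K(H)$ is the prototype of a compact C$^*$-algebra in the sense used by the cited corollaries, so the second case applies with $M=K(H)$. Both special cases are therefore covered by the general statement just proved.

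There is essentially no genuine obstacle here, since the whole content of the theorem has been packaged into the hypothesis of Theorem \ref{t pattern}: the hard transfer argument from $A$ to $C_0(L,A)$ has already been carried out there, and the verification of that hypothesis for atomic von Neumann algebras and compact C$^*$-algebras is provided by \cite{CaPe2016}. The only small points worth double-checking, and which I would address in the write-up, are that the class ``atomic von Neumann algebra'' used in Theorem \ref{t weak2local derivations C0LB(H)} coincides with (or is contained in) the class for which \cite[Corollary 3.5]{CaPe2016} applies, and that $K(H)$ indeed lies in the class of compact C$^*$-algebras to which \cite[Corollary 3.6]{CaPe2016} refers; both are standard and explain why the author closes the statement with $\hfill\Box$.
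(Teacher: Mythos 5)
Your proposal is correct and is essentially identical to the paper's own argument: the authors likewise combine Corollaries 3.5 and 3.6 of \cite{CaPe2016} (which supply the hypothesis on the coefficient algebra) with Theorem \ref{t pattern}, and the ``in particular'' clause follows exactly as you describe. The only cosmetic point is that the paper's preceding paragraph says ``dual von Neumann algebra'' where the theorem says ``atomic,'' a discrepancy your double-check of the class of algebras already anticipates.
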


The proofs of Theorem \ref{t pattern} above remains valid to obtain the following result.

\begin{theorem}\label{t pattern 2local} Let $L$ be a locally compact Hausdorff space. Suppose $A$ is a C$^*$-algebra such that every 2-local derivation on $A$ is a {\rm(}linear{\rm)} derivation. Then every 2-local derivation on $C_0(L,A)$ is a {\rm(}linear{\rm)} derivation.$\hfill\Box$
\end{theorem}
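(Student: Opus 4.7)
My plan is to mirror the argument of Theorem \ref{t pattern} step by step, replacing every appeal to the weak-2-local property by the corresponding 2-local property, and verifying that each intermediate lemma continues to hold in this strictly stronger setting.

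First, I would establish the 2-local analogue of Lemma \ref{l weak-2-local derivation anihilates on deltat}: if $\Delta\colon C_0(L,A)\to C_0(L,A)$ is a 2-local derivation and $Y(t_0)=0$, then $\Delta(Y)(t_0)=0$. Given such $Y$ and any fixed $X\in C_0(L,A)$, by the 2-local hypothesis there is a (linear) derivation $D$ on $C_0(L,A)$ with $\Delta(Y)=D(Y)$ and $\Delta(X)=D(X)$. Applying Lemma \ref{l evaluation at positive functionals} to $D$ and the positive functional $\varphi\otimes\delta_{t_0}$ (for an arbitrary positive $\varphi\in A^*$), we get $(\varphi\otimes\delta_{t_0})D(Y)=0$, hence $\Delta(Y)(t_0)=D(Y)(t_0)=0$. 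The same trick, applied to $X$ and $\Gamma_{f_{K,O}}(X(t))$ simultaneously with a single derivation $D$, yields the pointwise identity $\Delta(X)(t)=\delta_t\Delta\Gamma_{f_{K,O}}\delta_t(X)$ for each $t\in K$.

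Second, I would show that for each $t_0\in L$ and each admissible $f_{K,O}$ with $t_0\in K$, the composition $\delta_{t_0}\Delta\Gamma_{f_{K,O}}\colon A\to A$ is a 2-local derivation on $A$. Indeed, given $a,b\in A$, the 2-local property of $\Delta$ applied to the two elements $\Gamma_{f_{K,O}}(a)$ and $\Gamma_{f_{K,O}}(b)$ furnishes a single derivation $D$ on $C_0(L,A)$ satisfying $\Delta\Gamma_{f_{K,O}}(a)=D\Gamma_{f_{K,O}}(a)$ and $\Delta\Gamma_{f_{K,O}}(b)=D\Gamma_{f_{K,O}}(b)$; composing with $\delta_{t_0}$ gives the 2-local condition, provided we know that $\delta_{t_0}D\Gamma_{f_{K,O}}$ is a derivation on $A$. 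This last fact is exactly the computation already carried out in the proof of Theorem \ref{t pattern} using the factorization $f_{K,O}=g_{K,O}^2$ and the $A$-bimodule compatibility, and it does not use any 2-local hypothesis, so it transfers without change.

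Third, I invoke the hypothesis on $A$: every 2-local derivation on $A$ is linear. Therefore $\delta_{t_0}\Delta\Gamma_{f_{K,O}}$ is additive, and combining this with the pointwise formula $\delta_{t_0}\Delta(X)=\delta_{t_0}\Delta\Gamma_{f_{K,O}}\delta_{t_0}(X)$ exactly as at the end of the proof of Theorem \ref{t pattern} yields $\delta_{t_0}\Delta(X+Y)=\delta_{t_0}\Delta(X)+\delta_{t_0}\Delta(Y)$ for all $X,Y\in C_0(L,A)$. Since $t_0\in L$ was arbitrary, $\Delta$ is additive; together with the 1-homogeneity of every 2-local $\mathcal S$-map \cite[Lemma 2.1]{CaPe2016}, this gives linearity of $\Delta$, and since $\Delta$ is then a linear 2-local derivation, a standard argument (or, alternatively, the local-implies-derivation result already cited in the proof of Theorem \ref{t pattern}) shows it is a derivation.

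The main obstacle, and also the reason the author asserts that the proof is the same, is to be convinced that nothing in Theorem \ref{t pattern} actually required the functional $\varphi$ to vary freely: each step uses only one auxiliary derivation at a time, selected from a pair of test elements, which is precisely what the 2-local condition supplies. Once this is recognized the verification is routine and the argument transports verbatim.
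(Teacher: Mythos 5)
Your proposal is correct and follows exactly the route the paper intends: the paper's entire proof of Theorem \ref{t pattern 2local} is the one-line assertion that the argument of Theorem \ref{t pattern} remains valid, and your write-up is a faithful verification of that claim, correctly noting that each step selects a single auxiliary derivation from a pair of test elements (so the 2-local hypothesis suffices) and that the weaker hypothesis on $A$ is what forces one to check that $\delta_{t_0}\Delta\Gamma_{f_{K,O}}$ is genuinely 2-local rather than merely weak-2-local.
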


\begin{corollary}\label{c 2local derivations C0LB(H)} Let $M$ be a von Neumann algebra, and let $L$ a locally compact Hausdorff space. Then every 2-local derivation on $C_0(L,M)$ is a linear derivation. $\hfill\Box$
\end{corollary}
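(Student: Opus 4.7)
The plan is to derive this corollary as an immediate application of Theorem \ref{t pattern 2local}, reducing the statement about $C_0(L,M)$ to the analogous statement for $M$ itself, which is already available in the literature.

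More concretely, first I would invoke Theorem \ref{t pattern 2local}: this theorem asserts that if $A$ is a C$^*$-algebra for which every 2-local derivation is a linear derivation, then the same stability property automatically transfers to $C_0(L,A)$ for any locally compact Hausdorff space $L$. So the task reduces to verifying the hypothesis of Theorem \ref{t pattern 2local} with $A = M$, that is, to checking that every 2-local derivation on an arbitrary von Neumann algebra $M$ is a linear derivation.

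Next, this hypothesis is supplied by the Ayupov--Kudaybergenov theorem \cite{AyuKuday2014}, already recalled in the introduction: for every von Neumann algebra $M$, each 2-local derivation $\Delta: M\to M$ is automatically a (linear) derivation. Substituting this into the conclusion of Theorem \ref{t pattern 2local} with $A = M$ yields that every 2-local derivation on $C_0(L,M)$ is a linear derivation, which is precisely the statement of the corollary.

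There is essentially no obstacle to overcome at this stage; the substantive content was already packaged into Theorem \ref{t pattern 2local} (whose proof runs in parallel to that of Theorem \ref{t pattern}, relying on Lemma \ref{l weak-2-local derivation anihilates on deltat} and the module-homomorphism behaviour of $\delta_{t_0}$ and $\Gamma_{f_{K,O}}$) and into the Ayupov--Kudaybergenov result. The only thing to be careful about is that Theorem \ref{t pattern 2local} is stated for arbitrary locally compact Hausdorff $L$ and arbitrary C$^*$-algebras $A$, so applying it to the (generally non-separable, non-unital) algebra $C_0(L,M)$ requires no additional assumptions on $L$ or on $M$; in particular, $M$ need not be atomic, unlike in Theorem \ref{t weak2local derivations C0LB(H)}, since the 2-local (as opposed to weak-2-local) version of the base case holds for \emph{all} von Neumann algebras.
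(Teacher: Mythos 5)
Your proposal is correct and follows exactly the route the paper intends: the corollary is an immediate consequence of Theorem \ref{t pattern 2local} applied with $A=M$, whose hypothesis is supplied by the Ayupov--Kudaybergenov theorem that every 2-local derivation on a von Neumann algebra is a derivation. Your remark that no atomicity assumption on $M$ is needed here (in contrast with the weak-2-local case) is also accurate.
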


After proving that every weak-2-local derivation on $C_0 (L,B(H))$ (respectively, on $C_0(L,K(H))$) is a linear derivation, it becomes more interesting to find concrete representations for derivations on these spaces without requiring elements in their second duals. This will be done in the next sections.\smallskip

Before ending this section we recall a well known, and standard, argument linking derivations and $^*$-derivations. Let $A$ and $B$ be C$^*$-algebras.  For each mapping $\Psi : A\to B$, we define a mapping $\Psi^{\sharp}: A \to B$ given by $\Psi^{\sharp} (a) =\Psi(a^*)^*$ ($a\in A$). Clearly, $\Psi$ is linear if and only if $\Psi^{\sharp}$ is. A mapping satisfying $\Psi^{\sharp} =\Psi$ is called \emph{symmetric}. It is proved in \cite{CaPe2015} that every weak-2-local $^*$-derivation on a C$^*$-algebra and a derivation. Every mapping $\Delta$ on a C$^*$-algebra $A$ can be written as a linear combination $\Delta=\Delta_1 + i \Delta_2$ of two symmetric maps $\Delta_1 =\frac12 (\Delta+\Delta^{\sharp})$ and $\Delta_2 =\frac{1}{2 i}  (\Delta-\Delta^{\sharp})$. Notice that if $\Psi$ is symmetric and a weak 2 local derivation we do not know a priori that $\Psi$ it is a weak 2 local $^*$ derivation. Hence, it is an open problem whether every weak-2-local derivation which is also a symmetric mapping is linear or not (compare \cite{NiPe2014,NiPe2015,CaPe2015} and \cite[Problem 1.4]{CaPe2016}).\smallskip

Throughout this note, the centre of a C$^*$-algebra $A$ will be denoted by $Z(A)$. Let $a,b$ be elements in a C$^*$-algebra $A$.

\section{Inner derivations determined by elements in the multiplier algebra}\label{sec:norm estimations}

It is  well known that elements $a$ and $b$ in a C$^*$-algebra $A$ define the same inner derivation on $A$ (i.e. $\hbox{ad}_{a} = \hbox{ad}_{b}$) if and only if $a-b\in Z(A)$. A simple application of the triangular inequality shows that the norm of the inner derivation $\hbox{ad}_{a}$ is bounded by the double of the distance from $a$ to $Z(A)$, that is, \begin{equation}\label{eq distance to the centre bounds the norm of an inner der} \| \hbox{ad}_{a}\|= \left\| [a,. \ ] \right\| \leq 2 \dist (a, Z(A)).
 \end{equation}
The question whether the inequality in \eqref{eq distance to the centre bounds the norm of an inner der} is in fact an equality has been the goal of study of many researchers (compare \cite{KadLanRing67,Stampfli,Gaje72,Szi73,Ell78,Arch78,Somm93,Somm94}, and \cite{ArchSom2004}, among many others). In \cite[Example 6.2]{KadLanRing67} the authors exhibit a unital C$^*$-algebra $\mathcal{U}$ containing a sequence of unitary elements $(u_n)\subset \mathcal{U}$ such that $\displaystyle \| \hbox{ad}_{u_n}\|= \left\| [u_n,. \ ] \right\| \stackrel{n\to \infty}{\longrightarrow} 0,$ and $\dist (u_n, Z(\mathcal{U})) =1$, for every $n$. In other words, the inequality in \eqref{eq distance to the centre bounds the norm of an inner der} can be strict. To deal with a more detailed study, R.J. Archbold introduce in \cite{Arch78} the following constants. Given a C$^*$-algebra $A$,  $\mathcal{K}(A)$ (respectively, $\mathcal{K}_{s}(A)$) will denote
be the smallest number $\mathcal{K}$ in $[0, +\infty]$ such that $$\dist (a, Z(A)) \leq \mathcal{K} \| \hbox{ad}_{a}\|, \hbox{ for all  } a\in  A$$ (respectively, for all $a = a^*$ in $A$). It is known that $\frac 12 \mathcal{K}(A) \leq \mathcal{K}_{s} (A) \leq \mathcal{K}(A)$. Despite the wide number of papers studying these constants, we shall highlight the results exhibited next. We have already commented an example due to R.V. Kadison, E.C. Lance and J.R. Ringrose of a unital C$^*$-algebra $A$ satisfying $\mathcal{K}(A) =\infty$ (cf. \cite[Example 6.2]{KadLanRing67}), another example can be found in \cite{Kyle77}. It is proved in \cite[Theorem 3.1]{KadLanRing67} that $\mathcal{K}_{s} (A)\leq \frac12$ when $A$ is a von Neumann algebra, and in the general setting, $\mathcal{K} (A)<\infty $ if and only if the space of all inner derivations on $A$ is closed in the Banach space of all derivations on $A$. Clearly  $\mathcal{K} (A)=0$ when $A$ is commutative. It is known that  $\mathcal{K} (A)\leq \frac12$ whenever $A$ is unital and primitive \cite{Stampfli}, or just prime \cite[Corollary 2.9]{Somm94}, or a von Neumann algebra \cite{Szi73}, or an AW$^*$-algebra \cite{Ell78}. For an arbitrary unital, non-commutative C$^*$-algebra $A$ either $\mathcal{K} (A) =\frac12$, or $\mathcal{K} (A) = \frac{1}{\sqrt{3}}$, or $\mathcal{K} (A)\geq 1$, depending on the topological properties of the primitive and primal ideals of $A$ (see \cite{Somm94}).\smallskip

Let $M$ be a von Neumann algebra, and let $B\cong C(\Omega)$ be a unital abelian C$^*$-algebra. In \cite[Theorem 2.3]{AkJohn79}, C.A. Akemann and B.E. Johnson establish that every derivation of the C$^*$-tensor product $B \otimes M\cong C(\Omega, M)$ is inner, that is, for each derivation $D: C(\Omega, M) \to C(\Omega, M)$, there exists $Z_0\in C(\Omega,M)$ satisfying $D(X)= [Z_0, X]$, for every $X\in C(\Omega,M)$. We shall revisit the original proof of Akemann and Johnson by combining it with results due to R.V. Kadison, E.C. Lance, J.R. Ringrose \cite{KadLanRing67}, to strengthen the conclusion.\smallskip

Let us recall that for every C$^*$-algebra, $A$, the \emph{multiplier algebra} of $A$, $M(A)$, is the set of all elements $x\in A^{**}$ such that $Ax, xA \subseteq A$. We notice that $M(A)$ is a C$^*$-algebra and contains the unit element of $A^{**}$. Clearly, $M(A)=A$ whenever $A$ is unital, otherwise $M(A)$ is an extension of $A$ contained in $A^{**}$. For a locally compact Hausdorff space $L$, the multiplier algebra of $C_0(L)$ is isomorphic to the algebra $C_b(L)$ of all bounded continuous functions on $L$; hence its spectrum is homeomorphic to the Stone-\v{C}ech compactification of $L$ (see \cite[Theorem III.6.30]{Tak}).\smallskip

We have already recalled that Sakai's theorem affirms that every derivation on a von Neumann algebra is inner. Another contribution due to Sakai shows that every derivation of a simple C$^*$-algebra with unit is inner (see \cite[Theorem 4.1.11]{S}). In order to deal with a simple C$^*$-algebra $A$ without unit, Sakai introduced in \cite{Sak71a} the multiplier algebra $M(A)$ (originally called the ``derived C$^*$-algebra'' of $A$ by Sakai). It is further established in the same paper that every derivation on $A$ extends to a unique derivation on $M(A)$, and that every derivation of $M(A)$ is inner. In particular, for each derivation $D$ on a simple C$^*$ algebra $A$ there exists $m\in M(A)$ such that $D(a)=[m,a]$.\smallskip

For separable C$^*$-algebras, C.A. Akemann, G.E. Elliott, G.K. Pedersen and J. Tokiyama \cite{AkEllPedTomi76} and \cite{Ell77} characterized those separable C$^*$-algebras $A$ satisfying that every derivation on $A$ is inner in $M(A)$. These are precisely the C$^*$-algebras which are the C$^*$-algebra direct sum of a family of simple C$^*$-algebras and a full C$^*$-algebra (that is, a C$^*$-algebra with only trivial central sequences).\smallskip

Additional extensions of a C$^*$-algebra $A$, like the local multiplier algebra $M_{loc}(A)$, have been introduced with the aim of proving that every derivation on $A$ extends to an inner derivation on the corresponding extension. G.K. Pedersen proved that for a separable C$^*$-algebra $A$ every derivation on $A$ extends to an inner derivation of $M_{loc}(A)$ (see \cite{Ped1978}). There are some recent extensions of these results due to D.W. Somerset \cite{Somm00} and I. Gogi{\'c} \cite{gogic1,gogic2}. But  much less seems to be known about proper extensions of Sakai, i.e. examples of algebras where the derivations are inner in $M(A)$. Let us note that $M(A)=A$ when $A$ is unital. So, being inner in $M(A)$ seems to be a natural extension for derivations in the setting of non unital C$^*$-algebras\smallskip

Given a C$^*$ algebra $A$ and a locally compact Hausdorff space $L$, the multiplier algebra $M(C_0(L,A))=M(C_0(L)\otimes A)$ coincides with the C$^*$-algebra $C_b( L, (M(A),\tau_s))$ of all bounded and continuous functions from $L$ into the space $(M(A),\tau_s)$, where $\tau_s$ stands for the \emph{strict topology} on $M(A)$ \cite[Corollary 3.4]{APT}. If the C$^*$-algebra $A$ has a unit then $M(A) = A$ and the strict topology coincides with the norm topology. If $A=K(H)$ then the strict topology of $M(A)=B(H)$ is the strong$^*$ topology of $B(H)$ (denoted by $s^* (B(H),B(H)_*)$ or simply by $s^*$). We refer to \cite[Definition 1.8.7]{S} for the concrete definition of the strong$^*$ topology. Thus $M(C_0(L,K(H)))=C_b(L,(B(H),s^*))$ (\cite[Corollary 3.4]{APT}). Once we have this identification, our purpose is to show that if $L$ is locally compact and paracompact then every derivation $D$ on $C_0(L,M)$ is inner in its multipler algebra, and for a general locally compact space $L$ the same happens for derivations on $C_0(L,K(H))$ and on $C_0(L,B(H))$. We also get bounds of the norms of the elements in the multiplier algebras representing the derivations.

\subsection{Derivations on $C_0(L,A)$ with $L$ paracompact}\ \smallskip

In most of the positive results representing a derivation $D$ on a C$^*$-algebra $A$ as an inner derivation of the form $\hbox{ad}_{z}$, with $z\in A$ or $z\in M(A)$, there exists a link between $\|D\|$ and $\|z\|$. This link does not appear in \cite[Theorem 2.3]{AkJohn79}, where C.A. Akemann and B.E. Johnson establish that, if $\Omega$ is a Hausdorff compact space, then every derivation of the C$^*$-tensor product $C(\Omega) \otimes M\cong C(\Omega, M)$ is inner, that is, for each derivation $D: C(\Omega, M) \to C(\Omega, M)$, there exists $Z_0\in C(\Omega,M)$ satisfying $D(X)= [Z_0, X]$, for every $X\in C(\Omega,M)$. We shall revisit the original proof of Akemann and Johnson by combining it with results due to R.V. Kadison, E.C. Lance, J.R. Ringrose \cite{KadLanRing67}, to extend the result to the non unital case.

\begin{theorem}\label{t AkJohn paracompact strengtened with control on the norm} Let $M$ be a von Neumann algebra, let $L$ be a locally compact space which in addition is paracompact.  Let $\varepsilon$ be a positive element. Then for each $^*$-derivation $D: C(L, M)\to C(L, M)$, there exists $Z_0\in C_b(L,M)$ such that $D(X)= [Z_0, X]$, for every $X\in C_0(L,M)$, $Z_0^*=-Z_0$, and $$\|Z_0 \| \leq (\mathcal{K}_s (M)+\varepsilon)\|D\| \leq \frac{1+2 \varepsilon}{2} \|D\|.$$ Consequently, for each derivation $D: C_0(L, M)\to C_0(L, M)$, there exists $Z_0\in C_b(L,M)$ such that $D(X)= [Z_0, X]$, for every $X\in C_0(\Omega,M)$ and $\|Z_0 \| \leq  (1+2 \varepsilon)\|D\|$.
\end{theorem}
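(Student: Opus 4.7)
The plan is to reduce to a $^*$-derivation with a skew-adjoint implementer, handle the $^*$-case by combining a local version (in the spirit of Theorem \ref{t representation of derivations on C0LM} and the Akemann--Johnson theorem \cite[Theorem 2.3]{AkJohn79}) with a partition-of-unity argument available thanks to paracompactness, and exploit that any two local implementers differ by a central-valued function so that a convex combination of them still implements $D$.

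First, I would deduce the ``consequently'' statement from the first assertion by the standard $^*$-decomposition. Write $D = D_1 + i D_2$ with $D_1 = \tfrac12(D+D^{\sharp})$ and $D_2 = \tfrac{1}{2i}(D - D^{\sharp})$, each a $^*$-derivation of norm $\leq \|D\|$. Applying the first part with the same $\varepsilon$ to each $D_j$ yields skew-adjoint $Z_j \in C_b(L,M)$ with $[Z_j,X] = D_j(X)$ and $\|Z_j\| \leq (\mathcal{K}_s(M)+\varepsilon)\|D\|$; then $Z_0 := Z_1 + i Z_2$ satisfies $[Z_0,X] = D(X)$ and $\|Z_0\| \leq \|Z_1\|+\|Z_2\| \leq 2(\mathcal{K}_s(M)+\varepsilon)\|D\| \leq (1+2\varepsilon)\|D\|$, as desired.

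For the $^*$-case, the local step is as follows. For each relatively compact open $U \subseteq L$, apply Akemann--Johnson on $C(\overline{U},M)$ to a localized version of $D$ (obtained via a cut-off $f_{\overline{U},V}$ as in the proof of Theorem \ref{t pattern}) to produce a continuous $W_U : \overline{U} \to M$ with $D(X)(t) = [W_U(t), X(t)]$ on $\overline{U}$. Since $D$ is $^*$-preserving, $W_U(t)+W_U(t)^{*} \in Z(M)$ pointwise, so after subtracting the self-adjoint central part we may assume $W_U$ is skew-adjoint. To reach the sharp norm, note that at every $t \in \overline{U}$ the pointwise derivation $\mathrm{ad}_{W_U(t)}$ on $M$ has norm at most $\|D\|$, so by definition of $\mathcal{K}_s(M)$ there exists a skew-adjoint central $c(t) \in i Z(M)_{sa}$ with $\|W_U(t)-c(t)\| \leq (\mathcal{K}_s(M)+\varepsilon)\|D\|$; one then selects $c$ continuously on the compact set $\overline{U}$ by covering $\overline{U}$ by finitely many small open sets on which a single almost-optimal central element suffices and reassembling by an auxiliary partition of unity on $\overline{U}$. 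This produces a skew-adjoint $Z_U := W_U - c \in C(\overline{U},M)$ of norm $\leq (\mathcal{K}_s(M)+\varepsilon)\|D\|$ implementing $D$ on $\overline{U}$.

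Finally, the global step uses paracompactness. Choose a locally finite cover $\{U_i\}_{i \in I}$ of $L$ by relatively compact open sets with a subordinate partition of unity $\{\phi_i\} \subseteq C_c(L)$, $\phi_i \geq 0$, $\sum_i \phi_i = 1$, and define $Z_0 := \sum_i \phi_i Z_{U_i}$, each summand extended by $0$ outside $U_i$. Local finiteness makes $Z_0$ a continuous, bounded, skew-adjoint function $L \to M$; at each $t$, $Z_0(t)$ is a convex combination of the $Z_{U_i}(t)$'s, hence $\|Z_0(t)\| \leq (\mathcal{K}_s(M)+\varepsilon)\|D\|$. For any $t$ and any $j$ with $\phi_j(t) > 0$, the difference $Z_0(t) - Z_{U_j}(t) = \sum_i \phi_i(t) (Z_{U_i}(t) - Z_{U_j}(t))$ lies in $Z(M)$, since each $Z_{U_i}(t) - Z_{U_j}(t)$ implements the zero derivation of $M$; therefore $[Z_0(t), X(t)] = [Z_{U_j}(t), X(t)] = D(X)(t)$ for every $X$ and $t$. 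The main obstacle I anticipate is the continuous central selection of $c(t)$ in the local step: the natural pointwise optimum is only upper semicontinuous in $t$, so getting a bona fide continuous near-optimum with the sharp $\mathcal{K}_s(M)$ constant requires the auxiliary averaging argument above rather than a simple selection theorem.
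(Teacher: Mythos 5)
Your proposal is correct, but the core selection argument is genuinely different from the paper's. The paper shows that the map $t\mapsto \theta^{-1}(\delta_t D \Gamma_{f_{K,O}})\in iM_{sa}/Z(M_{sa})$ is continuous (using \cite[Theorem 2.1]{AkJohn79}, the equivalence of point-norm and norm compactness in $\mathrm{Der}(M)$), defines the carrier $G(t)$ as the intersection of that coset with the open ball of radius $(\mathcal{K}_s(M)+\varepsilon)\|D\|$, verifies that $G$ is lower semicontinuous with nonempty convex values, and obtains $Z_0$ in one stroke from Michael's selection theorem -- this is exactly where paracompactness enters for the paper. You instead invoke the compact-base Akemann--Johnson theorem \cite[Theorem 2.3]{AkJohn79} as a black box to get local continuous implementers $W_U$, correct them by continuous central functions (finite partition of unity plus convexity) to reach the $(\mathcal{K}_s(M)+\varepsilon)\|D\|$ bound, and glue with a global locally finite partition of unity, exploiting that the set of implementers at each point is an affine translate of $Z(M)$, so convex combinations of implementers still implement $D$ and do not increase the norm. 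Your gluing is in effect a hands-on proof of the special case of Michael's theorem needed here; it buys a more elementary argument in which paracompactness is used only through partitions of unity, at the cost of a two-level averaging. The reduction of the general case to the $^*$-derivation case is identical in both proofs. Two points worth making explicit in a final write-up: the localized derivation on $C(\overline{U},M)$ is well defined because Lemma \ref{l weak-2-local derivation anihilates on deltat} shows that $D$ preserves the ideal $\{X: X|_{\overline{U}}=0\}$ and hence descends to the quotient $C(\overline{U},M)$; and the central-correction step, whose slack is $\varepsilon\|D\|$, requires the trivial case $\|D\|=0$ to be disposed of separately.
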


\begin{proof} The proof of the statement will follow by an adaptation of the original arguments in \cite[Theorem 2.3]{AkJohn79} with a slight modification motivated by the conclusions in \cite[Theorem 3.1]{KadLanRing67}.\smallskip

Let $D: C_0(L, M)\to C_0(L, M)$ be a $^*$-derivation. Henceforth, $\tau$ will stand for the topology of $L$. According to the terminology employed in this note, let $t\in L$ and let $t\in K\subsetneq O\subsetneq L$, with $K$ compact and $O$ open. Let us write $\Gamma =\Gamma_{f_{K,O}}.$ It follows from Lemma \ref{l weak-2-local derivation anihilates on deltat} that, for each $t\in L$, the mapping $\delta_t D \Gamma : M\to M$ is a $^*$-derivation on $M$ (we further know that the definition does not depend on $K$ and $O$). Fix $t_0\in L$ and a compact neighbourhood $K$ of $t_0$. It also holds that the mapping $\Upsilon_K : K \to \hbox{Der} (M),$ $t\mapsto \Upsilon(t)=\delta_t D \Gamma$ is a $\tau$-to-(point-norm) continuous mapping (here $\Gamma=\Gamma_{f_{K,O}}$ is fixed). Therefore $\Upsilon_K (K)$ is a point-norm compact subset in Der$(M)$. Theorem 2.1 in \cite{AkJohn79} assures that $\Upsilon_K (K)$ is norm compact, and hence the point-norm and the norm topologies coincide on $\Upsilon_K (K)$. Thus
$\Upsilon_K$ is $\tau$-to-point norm continuous. Define now a mapping $\Upsilon: L\to \hbox{Der}(M)$ given by $\Upsilon_K(t)$ if $t\in K$. $\Upsilon$ is well defined by Lemma \ref{l weak-2-local derivation anihilates on deltat} and it is $\tau$-to norm continuous by the local compactness of $L$ and the continuity of each $\Upsilon_K$.
\smallskip

By Sakai's theorem (see \cite{Sak60}), every derivation on $M$ is inner and the mapping $\theta: i M_{sa}/Z(M_{sa}) \to \hbox{$^*$-Der} (M)$, $a+Z(A)\mapsto \hbox{ad}_a$ is an isomorphism of Banach spaces. Therefore, the mapping $\theta^{-1} \Upsilon: \Omega \to i M_{sa}/Z(M_{sa})$ is continuous. Let $2^{i M_{sa}}$ denote the family of non-empty subsets of $i M_{sa}$. We define a carrier, $G: \Omega \to 2^{i M_{sa}}$, given by $$G(t) = (\theta^{-1} \Upsilon) (t) \cap B_{M} (0,(\mathcal{K}_s (M)+\varepsilon)\|D\|),$$ where $B_{M} (0,(\mathcal{K}_s (M)+\varepsilon)\|D\|)$ denotes the open unit ball in $M$ with center zero and radius $(\mathcal{K}_s (M)+\varepsilon)\|D\|)$, that is $G(t)$ is the set of all elements $c\in i M_{sa}$ belonging to the class $(\theta^{-1} \Upsilon) (t)\subset i M_{sa}/Z(M_{sa})$ with norm $\|c \|< (\mathcal{K}_s (M)+\varepsilon)\|D\|.$ By definition, $G(t)$ is convex and non-empty for every $t\in L$. It is not hard to check that $G$ is lower semi-continuous (compare \cite[examples in page 362]{Michael56}). It follows from \cite[Proposition 2.3]{Michael56} that $t\mapsto \overline{G(t)}$ is lower semi-continuous; and clearly $\overline{G(t)}$ is non-empty, closed and convex. Therefore, by Michael's selection principle (see \cite[Theorem 3.2]{Michael56}) $\overline{G}$ admits a continuous selection, that is, there exists a continuous function $Z_0: L \to i M_{sa}$ satisfying $Z_0 (t) \in \overline{G(t)}$ for every $t\in L$, and $\|Z_0 \| \leq (\mathcal{K}_s (M)+\varepsilon)\|D\|$. $Z_0$ is obviously a bounded function, and a glance to Lemma \ref{l weak-2-local derivation anihilates on deltat} is enough to convince ourself that $D = [Z_0,.]$, as we desired.\smallskip

By \cite[Theorem 3.1]{KadLanRing67} we know that $\mathcal{K}_{s} (M)\leq \frac12$. The final statement follows from the fact that every derivation can be written as a linear combination of two $^*$-derivations.
\end{proof}

Since each compact space also is paracompact, as an immediate corollary of the above theorem we get an straightened version of \cite[Theorem 2.3]{AkJohn79}, where the conclusion also contains an estimate of the norm on the function representing the derivation.

\begin{theorem}\label{t AkJohn strengtened with control on the norm} Let $M$ be a von Neumann algebra, and let $C(\Omega)$ be a unital abelian C$^*$-algebra. Let $\varepsilon$ be a positive element. Then for each $^*$-derivation $D: C(\Omega, M)\to C(\Omega, M)$, there exists $Z_0\in C(\Omega,M)$ such that $D(X)= [Z_0, X]$, for every $X\in C(\Omega,M)$, $Z_0^*=-Z_0$, and $$\|Z_0 \| \leq (\mathcal{K}_s (M)+\varepsilon)\|D\| \leq \frac{1+2 \varepsilon}{2} \|D\|.$$ Consequently, for each derivation $D: C(\Omega, M)\to C(\Omega, M)$, there exists $Z_0\in C(\Omega,M)$ such that $D(X)= [Z_0, X]$, for every $X\in C(\Omega,M)$ and $\|Z_0 \| \leq  (1+2 \varepsilon)\|D\|$.
\end{theorem}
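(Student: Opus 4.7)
The plan is to obtain this theorem as a direct specialization of Theorem \ref{t AkJohn paracompact strengtened with control on the norm}. The essential observation is that every compact Hausdorff space $\Omega$ is automatically paracompact (any open cover has a finite, hence locally finite, subcover), and that compactness forces $C_0(\Omega, M) = C(\Omega, M) = C_b(\Omega, M)$. Thus setting $L = \Omega$ in the paracompact theorem delivers, for each $^*$-derivation $D: C(\Omega, M) \to C(\Omega, M)$, a continuous function $Z_0 \in C(\Omega, M)$ with $Z_0^* = -Z_0$ satisfying $D(X) = [Z_0, X]$ for every $X \in C(\Omega, M)$, together with the estimate $\|Z_0\| \leq (\mathcal{K}_s(M) + \varepsilon)\|D\|$. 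The inequality $\mathcal{K}_s(M) \leq \frac{1}{2}$ of \cite[Theorem 3.1]{KadLanRing67}, already invoked in the proof of the paracompact theorem, then gives the sharper bound $\|Z_0\| \leq \frac{1 + 2\varepsilon}{2}\|D\|$.

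For a general (not necessarily symmetric) derivation $D$, the strategy will be the standard decomposition recalled at the end of Section \ref{sec: weak-2-local der on CoL}: write $D = D_1 + i D_2$ where $D_1 = \frac{1}{2}(D + D^{\sharp})$ and $D_2 = \frac{1}{2i}(D - D^{\sharp})$ are both $^*$-derivations on $C(\Omega, M)$. Applying the symmetric statement to each of them yields skew-adjoint $Z_0^{(1)}, Z_0^{(2)} \in C(\Omega, M)$ implementing $D_1, D_2$ and satisfying $\|Z_0^{(j)}\| \leq \frac{1 + 2\varepsilon}{2} \|D_j\|$. Setting $Z_0 = Z_0^{(1)} + i Z_0^{(2)}$ gives $D(X) = [Z_0, X]$ for all $X$, and the triangle inequality combined with $\|D_j\| \leq \|D\|$ (since $\|D^{\sharp}\| = \|D\|$) produces the doubled bound $\|Z_0\| \leq (1 + 2\varepsilon)\|D\|$.

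I do not anticipate any real obstacle: the heavy lifting---Michael's selection principle applied to the lower semi-continuous carrier $t \mapsto \overline{G(t)}$ of inner implementations of the local derivations $\delta_t D \Gamma$, together with the Akemann--Johnson norm compactness of $\Upsilon_K(K) \subset \hbox{Der}(M)$ that upgrades point-norm continuity to norm continuity---was already performed in the proof of Theorem \ref{t AkJohn paracompact strengtened with control on the norm}. What remains is only the verification that the function spaces coincide when $L = \Omega$ is compact and that the norm estimates transfer correctly through the symmetric/antisymmetric splitting, both of which are routine.
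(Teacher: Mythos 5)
Your proposal is correct and coincides with the paper's own treatment: the paper derives this theorem as an immediate corollary of Theorem \ref{t AkJohn paracompact strengtened with control on the norm}, observing precisely that every compact Hausdorff space is paracompact, and obtains the non-symmetric case from the splitting $D=D_1+iD_2$ into $^*$-derivations exactly as you describe. Nothing is missing.
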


\subsection{Derivations on $C_0(L,A)$ for a general locally compact Hausdorff space $L$}\ \smallskip

Let $\Omega$ be a compact Hausdorff space. We observe that $M_n (C(\Omega)) $ $= C(\Omega,M_n)$ $= C(\Omega)\otimes M_n$, and consequently, every derivation on $M_n(C(\Omega))$ is inner (see \cite[Theorem 2.3]{AkJohn79}). Our next result is an appropriate non-unital version of this fact.

\begin{lemma}\label{l C0(LM_n) has the inner derivation property} Let $A$ be a C$^*$-algebra. Then every derivation $D$ on $M_n (A)$ writes in the form $D(X) = [Z,X] $, where $Z=(z_{i,j})$ satisfies $z_{ij}\in M(A)\subseteq A^{**}$ for every $i\neq j$.
When $A=C_0(L)$ is a commutative C$^*$-algebra, we can also assume that $z_{ii}\in M(A)=C_b(L)$, for every $i$, or equivalently, $Z\in M_n (M(A))=M_n (C_b (L))$.
\end{lemma}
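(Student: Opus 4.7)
The plan is to pass to the bidual, apply Sakai's theorem, and then read off the multiplier condition from the fact that $D$ sends $M_n(A)$ into itself. Concretely, any bounded derivation on a C$^*$-algebra extends uniquely to a weak$^*$-continuous derivation on its bidual, so $D$ extends to $\widetilde{D}$ on $M_n(A)^{**}$. Under the canonical identification $M_n(A)^{**}=M_n(A^{**})$, the target is a von Neumann algebra, and Sakai's theorem provides $Z=(z_{ij})\in M_n(A^{**})$ with $\widetilde{D}(X)=[Z,X]$ for all $X\in M_n(A^{**})$, hence $D(X)=[Z,X]$ for all $X\in M_n(A)$.

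Next I would test this identity against the matrix units. Writing $a\otimes e_{ij}$ for the matrix with $a\in A$ in position $(i,j)$ and zeros elsewhere, a short computation yields
$$
[Z,\,a\otimes e_{ij}] \;=\; \sum_{k\neq i}(z_{ki}\,a)\otimes e_{kj} \;+\; (z_{ii}\,a - a\,z_{jj})\otimes e_{ij} \;-\; \sum_{l\neq j}(a\,z_{jl})\otimes e_{il}.
$$
Since the left-hand side lies in $M_n(A)$, every coefficient on the right lies in $A$. Varying $a$ through $A$ and letting $(i,j)$ range over all pairs with $i\neq j$, one obtains $z_{pq}\,a\in A$ and $a\,z_{pq}\in A$ for every $a\in A$ and every $p\neq q$, which is precisely the condition $z_{pq}\in M(A)$.

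For the commutative case $A=C_0(L)$, the bidual $A^{**}$ is abelian, so $a\,z_{jj}=z_{jj}\,a$ and the coefficient at position $(i,j)$ simplifies to $(z_{ii}-z_{jj})\,a$. Consequently $z_{ii}-z_{jj}\in M(A)=C_b(L)$ for every $i,j$. Replace $Z$ by $Z':=Z-z_{11}\,I_n$, where $I_n$ denotes the identity of $M_n(A^{**})$. Since $z_{11}\in A^{**}=Z(A^{**})$, the scalar matrix $z_{11}\,I_n$ is central in $M_n(A^{**})$, so $[Z',X]=[Z,X]=D(X)$ for every $X\in M_n(A)$. The off-diagonal entries of $Z'$ coincide with those of $Z$ and the diagonal entries are $0,\,z_{22}-z_{11},\,\ldots,\,z_{nn}-z_{11}$, all in $C_b(L)$; hence $Z'\in M_n(C_b(L))$, as required.

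The sole delicate point is bookkeeping in the commutator identity for $a\otimes e_{ij}$; once it is in hand, both conclusions drop out by matching coefficients in $M_n(A)$, and no further input beyond Sakai's theorem and the definition $M(A)=\{x\in A^{**}:Ax,xA\subseteq A\}$ is needed.
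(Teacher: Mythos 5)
Your proposal is correct and follows essentially the same route as the paper: pass to $M_n(A)^{**}=M_n(A^{**})$, apply Sakai's theorem, read off the multiplier conditions from the commutator with the matrix units $a\otimes e_{ij}$, and in the commutative case subtract a central scalar matrix (you use $z_{11}I_n$ where the paper uses $z_{nn}I_n$). The commutator identity and the coefficient-matching are accurate, so no gaps.
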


\begin{proof} Suppose $D: M_n (A)\to M_n (A)$ is a derivation. We can, obviously, identify $M_n (A)^{**}$ with $M_n (A^{**})$ in a canonical way. Since $D^{**} : M_n (A)^{**}\to M_n (A)^{**}$ is a derivation on a von Neumann algebra, by Sakai's theorem, there exists $Z= (z_{ij}) \in  M_n (A^{**})$ satisfying $D^{**}(X) = [Z,X]$ for every $X\in M_n (A^{**})$. We further know that $D (X) = [Z,X]\in M_n (A)$ for every $X\in M_n (A)$.\smallskip

Fix $i,j\in \{1,\ldots, n\}$ and $a\in A$. Since $$ M_n (A)\ni D (a\otimes {e}_{ij}) = [Z,a\otimes {e}_{ij}] = Z (a\otimes {e}_{ij}) - (a\otimes {e}_{ij}) Z $$ $$=(z_{ii} a - a z_{jj}) \otimes {e}_{ij} + \sum_{k=1, k\ne i}^n (z_{ki} a) \otimes {e}_{kj} - \sum_{k=1, k\neq j}^n (a z_{jk}) \otimes {e}_{ik},$$ we deduce that $(z_{ii} a - a z_{jj})$, $z_{ki} a$, and $a z_{jk}$ all lie in $A$, for every $a\in A$, $k\neq i,j$ and every $i,j\in \{1,\ldots, n\}$. Therefore, $z_{ki}, z_{jk}\in M(A)$ for every $k\neq i,j$ and every $i,j\in \{1,\ldots, n\}$. This proves the first statement.\smallskip

Suppose now, that $A= C_0(L)$ is a commutative C$^*$-algebra. It follows from the above identities that $z_{ii} - z_{jj}\in M(A)$, for every $i,j\in \{1,\ldots,n\}$. Let  $D : M_n(A) \to M_n (A)$ be a derivation, and  let $Z$ be an element in $M_n (A^{**})$ satisfying $D^{**} (X) = [Z,X]$ for every $X\in M_n(A^{**})$. In this case the element $z_{nn}\otimes I_n\in Z(M_n (A^{**}))$, the center of  $M_n (A^{**})= A^{**}\otimes M_n $, and hence $$D^{**} (X) = [Z,X] = [Z-z_{nn} I_n,X], $$ for every $X\in M_n (A)^{**}$. Replacing $Z\in M_n (A)^{**}$ with $W= Z-z_{nn} I_n\in M_n (A)^{**}$ we can assume that $z_{nn}=0$. Applying the first statement to $W= Z-z_{nn} I_n$ we deduce that $w_{ij}\in M(A)$ for every $i,j\in \{1,\ldots, n\}$ and $D(X) = [W,X]$, for every $X\in M_n(A)$, which finishes the proof.
\end{proof}

We shall improve the result in Lemma \ref{l C0(LM_n) has the inner derivation property} with an appropriate control on the norm of the function $Z_0$ appearing in the statement in terms of the norm of the represented derivation $D$. We are led to the following extension of \cite[Proposition 2.8]{JorPe}.

\begin{lemma}\label{l C0(LM_n) has the idp control of the norm} Let $L$ be a locally compact Hausdorff space. Then for each $^*$-derivation $D: C_0(L,M_n)\to C_0(L,M_n)$ there exists a continuous and bounded function $Z_0: L\to M_n$ satisfying $\|Z_0\|_{\infty}\leq  \|D\|$, $Z_0^* (t) =- Z_0 (t)$, for every $t\in L$,  and $D(X) =[Z_0,X]$, for every $X\in C_0(L,M_n)$.
\end{lemma}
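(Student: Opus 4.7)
The plan is to combine Lemma \ref{l C0(LM_n) has the inner derivation property}, applied to $A=C_0(L)$, with a pointwise, finite-dimensional Stampfli-type shift by scalars to extract an anti-hermitian representative with controlled norm. First, using the identification $C_0(L,M_n) = M_n(C_0(L))$, Lemma \ref{l C0(LM_n) has the inner derivation property} produces $Z\in M_n(C_b(L)) = C_b(L,M_n)$ with $D(X)=[Z,X]$ for every $X\in C_0(L,M_n)$; thus $Z:L\to M_n$ is bounded and continuous. Plugging an Urysohn function $f_{K,O}$ with $f_{K,O}(t_0)=1$ into the estimate $\|D(f_{K,O}\otimes a)\|_{\infty}\le \|D\|\,\|a\|$ (and evaluating at $t_0$) gives the pointwise commutator bound $\|[Z(t),a]\|\le\|D\|\,\|a\|$ for every $a\in M_n$ and every $t\in L$.

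Next I would use the $^*$-derivation hypothesis to symmetrize $Z$. Since $D=D^{\sharp}$, pointwise evaluation forces $\hbox{ad}_{Z(t)}$ to be a $^*$-derivation on $M_n$, equivalently $Z(t)+Z(t)^*\in Z(M_n)=\C I_n$; being self-adjoint, this element equals $c(t)I_n$ for some $c(t)\in\R$, and the formula $c(t)=\tfrac{1}{n}\mathrm{tr}(Z(t)+Z(t)^*)$ exhibits $c$ as a bounded continuous real function on $L$. Setting $W(t):=Z(t)-\tfrac{c(t)}{2}I_n$ then gives $W\in C_b(L,M_n)$ with $W(t)^*=-W(t)$ for every $t$ and $D=\hbox{ad}_W$, since scalar multiples of $I_n$ are central.

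The key step is to shift $W$ pointwise by an anti-hermitian scalar to minimize norm. Writing $H(t):=iW(t)$, which is hermitian, and $\mu(t):=\tfrac{1}{2}\bigl(\lambda_{\max}(H(t))+\lambda_{\min}(H(t))\bigr)$, I would define
$Z_0(t):=W(t)+i\mu(t)I_n = -i\bigl(H(t)-\mu(t)I_n\bigr)$. Since $\mu$ is real-valued, $Z_0^*(t)=-Z_0(t)$, and clearly $D=\hbox{ad}_{Z_0}$. Stampfli's classical formula for $M_n\cong B(\C^n)$ gives $\|Z_0(t)\| = \|H(t)-\mu(t)I_n\| = \tfrac{1}{2}\bigl(\lambda_{\max}(H(t))-\lambda_{\min}(H(t))\bigr) = \tfrac{1}{2}\|\hbox{ad}_{H(t)}\| = \tfrac{1}{2}\|\hbox{ad}_{W(t)}\|\le \tfrac{1}{2}\|D\|\le\|D\|$, where the last inequality uses the commutator bound from the first paragraph together with $[Z(t),a]=[W(t),a]$.

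The main obstacle I anticipate is the continuity of the selection $\mu:L\to\R$, and this is precisely where the finite-dimensional hypothesis $M_n$ is essential. The maps $\lambda_{\max},\lambda_{\min}$ from the self-adjoint $n\times n$ matrices to $\R$ are continuous, being suprema and infima of the continuous functions $H\mapsto \langle Hx,x\rangle$ over the compact unit sphere of $\C^n$, so continuity of $t\mapsto H(t)$ forces continuity of $\mu$ automatically, and no Michael-selection theorem or paracompactness hypothesis on $L$ is required, in contrast with Theorem \ref{t AkJohn paracompact strengtened with control on the norm} for general von Neumann $M$.
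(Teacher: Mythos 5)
Your proof is correct and follows the same skeleton as the paper's: obtain a continuous bounded $Z$ with $D=\hbox{ad}_{Z}$ from Lemma \ref{l C0(LM_n) has the inner derivation property}, extract the pointwise bound $\|\hbox{ad}_{Z(t)}\|\leq \|D\|$ via $\delta_t D \Gamma_{f_{\{t\},O}}$, and then correct $Z$ by a continuous scalar-valued (hence central) function to control the norm. The differences lie in how the correction is chosen. The paper takes the anti-hermitian representative for granted and subtracts $\sigma_{\mathrm{min}}\otimes I_n$, where $\sigma_{\mathrm{min}}(t)$ is the eigenvalue of $Z_1(t)$ of smallest modulus; its continuity is imported from the arguments of \cite[Proposition 2.8]{JorPe}, and since $0$ then lies in the shifted spectrum, Stampfli's formula bounds the norm by the spectral diameter, i.e.\ by $\|D\|$. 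You instead (i) justify explicitly the reduction to $Z(t)^*=-Z(t)$ — since $\hbox{ad}_{Z(t)}$ is a $^*$-derivation, $Z(t)+Z(t)^*$ is a real multiple of $I_n$ recovered continuously by the normalized trace — a point the paper glosses over; and (ii) recentre at the spectral midpoint $\mu(t)=\tfrac12\bigl(\lambda_{\max}+\lambda_{\min}\bigr)(iW(t))$, whose continuity is immediate from the Lipschitz dependence of extreme eigenvalues on a hermitian matrix. This buys a cleaner continuity argument, avoiding the more delicate minimal-modulus selection, and yields the sharper estimate $\|Z_0\|\leq \tfrac12\|D\|$ (consistent with $\mathcal{K}_s(M_n)\leq\tfrac12$), whereas the paper's choice only gives $\|Z_0\|\leq\|D\|$. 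Both arguments are valid; yours is, if anything, tighter and more self-contained.
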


\begin{proof} Find, via Lemma \ref{l C0(LM_n) has the inner derivation property}, a bounded continuous function $Z_1: L\to M_n$ satisfying $Z_1^* (t) =- Z_1 (t)$, for every $t\in L$,  and $D(X) =[Z_1,X]$, for every $X\in C_0(L,M_n)$. Since $Z_1$ is bounded, we have $\|Z_1\|_{\infty} <\infty$. Pick $t\in L$ and an open set $O\subsetneq L$ such that $t\in O$, and keep in mind the notation employed before. The mapping $\delta_t D \Gamma_{f_{\{t\},O}} : M_n \to M_n$ is a $^*$-derivation. Clearly $\|\delta_t D \Gamma_{f_{\{t\},O}}\|\leq \|D\|$ and $\delta_t D \Gamma_{f_{\{t\},O}} (a) =[Z_1(t),a]$ for every $a\in M_n$. Since, by \cite[Corollary 1]{Stampfli} $\|[Z_1(t),.]\| = \diam(\sigma(Z_1(t)))$ (let us observe that $Z_1(t)^*=-Z_1(t)$), we deduce that $\diam(\sigma(Z_1(t)))\leq \|D\|$, for every $t\in L$. It is also obvious that $\sigma(Z_1(t))$ is a finite subset of $i \mathbb{R}$.\smallskip

The arguments in the proof of \cite[Proposition 2.8]{JorPe} show that the function $\sigma_{min} : L\to \mathbb{C}$, $\sigma_{min} (t) := \lambda\in \sigma(Z_1(t))$, where $\lambda$ is the unique element in $\sigma(Z_1(t))\subseteq i \mathbb{R}$ satisfying $|\lambda| = \min\{|\mu|: \mu \in \sigma (Z_1(t))\}$, is continuous (and bounded under our assumptions). The mapping $\sigma_{min}\otimes I_n : L \to M_n$ is center valued, bounded and continuous. We further know that $0\in \sigma ((Z_1-\sigma_{min}\otimes I_n) (t)) = \sigma_{min} (t) + \sigma(Z_1(t)) \subseteq i \mathbb{R}^{+}_0$,  and $$\|D\|\geq \diam(\sigma(Z_1(t))) = \diam(\sigma((Z_1-\sigma_{min}\otimes I_n) (t))),$$ for every $t\in L$. The proof concludes by taking $Z_0 = Z_1-\sigma_{min}\otimes I_n$.
\end{proof}

The next natural step is to consider a more general version of Lemma \ref{l C0(LM_n) has the inner derivation property} by replacing $M_n$ with a more general von Neumann algebra.\smallskip

We can state now a first ``local'' representation theorem for derivations on $C_0(L,M)$, where $M$ is an arbitrary von Neumann algebra and $L$ is a locally compact Hausdorff space.

\begin{theorem}\label{t representation of derivations on C0LM} Let $M$ be a von Neumann algebra, let $L$ be a locally compact Hausdorff space, and let $D: C_0(L,M) \to C_0(L,M)$ be a $^*$-derivation. Given $\varepsilon>0$, and a compact subset $K\subset L$, then there exists a continuous (and bounded) function $Z_K : K \to M$ such that  $\| Z_K \| \leq \frac{1+2 \varepsilon}{2} \|D\|$, $Z_K^*=-Z_K$, and $$D(X) (t) = [Z_K,X] (t),$$ for every $X\in C_0(L,M)$ and every $t\in K$. If $D$ is a general derivation on $C_0(L,M)$, then for each $\varepsilon>0$, and each compact subset $K\subset L$, there exists a continuous (and bounded) function $Z_K : K \to M$ such that $\| Z_K \| \leq ({1+2 \varepsilon}) \|D\|$ and $D(X) (t)= [Z_K , X](t)$, for every $X\in C_0(L,M)$ and every $t\in K$.
\end{theorem}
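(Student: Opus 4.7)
The plan is to adapt the proof of Theorem \ref{t AkJohn paracompact strengtened with control on the norm} to the present setting, with the simplification that we only need to produce $Z_K$ on the compact set $K$. The essential point is that every compact Hausdorff space is automatically paracompact, so Michael's selection principle is available on $K$ without requiring any paracompactness of the ambient space $L$.

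First I would treat the case of a $^*$-derivation $D$. Using local compactness of $L$, I would pick an open set $O \subset L$ with compact closure and $K \subset O \neq L$, together with a Urysohn function $f_{K,O} \in C_0(L)$ taking the value $1$ on $K$ and $0$ outside $O$; write $\Gamma := \Gamma_{f_{K,O}}$. Lemma \ref{l weak-2-local derivation anihilates on deltat} shows that for each $t \in K$ the mapping $\Upsilon(t) := \delta_t D \Gamma : M \to M$ is a $^*$-derivation on $M$ whose definition does not depend on the concrete choice of $f_{K,O}$ (provided $f_{K,O}(t)=1$). For every fixed $a \in M$ the map $t \mapsto \Upsilon(t)(a) = D(\Gamma(a))(t)$ is continuous, so $\Upsilon : K \to \mathrm{Der}(M)$ is point-norm continuous; its image is point-norm compact, hence by \cite[Theorem 2.1]{AkJohn79} also norm-compact, so $\Upsilon$ is in fact norm continuous.

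By Sakai's theorem the correspondence $\theta : iM_{sa}/Z(M_{sa}) \to \mathrm{Der}(M)$, $a + Z(M_{sa}) \mapsto \mathrm{ad}_a$, is an isomorphism of Banach spaces onto the subspace of $^*$-derivations, and consequently $\theta^{-1}\Upsilon : K \to iM_{sa}/Z(M_{sa})$ is norm continuous. I would then define the carrier
\[
G(t) := (\theta^{-1}\Upsilon)(t) \cap B_M\bigl(0,(\mathcal{K}_s(M)+\varepsilon)\|D\|\bigr),
\]
where $B_M(0,r)$ denotes the open ball of radius $r$ in $M$. Since $\|\Upsilon(t)\| \leq \|D\|$, the definition of $\mathcal{K}_s(M)$ ensures that $G(t)$ is nonempty, and it is clearly convex. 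As in the paracompact theorem, $G$ is lower semicontinuous on $K$, hence so is $t \mapsto \overline{G(t)}$, which takes nonempty closed convex values. Because $K$ is compact, hence paracompact, Michael's selection principle \cite[Theorem 3.2]{Michael56} yields a continuous selection $Z_K : K \to iM_{sa}$ with $Z_K(t) \in \overline{G(t)}$ and $\|Z_K\|_\infty \leq (\mathcal{K}_s(M)+\varepsilon)\|D\|$. Lemma \ref{l weak-2-local derivation anihilates on deltat} then yields $D(X)(t) = [Z_K, X](t)$ for all $X \in C_0(L,M)$ and $t \in K$, while the bound $\mathcal{K}_s(M) \leq \frac{1}{2}$ from \cite[Theorem 3.1]{KadLanRing67} delivers the claimed estimate $\|Z_K\| \leq \frac{1+2\varepsilon}{2}\|D\|$; skew-symmetry of $Z_K$ is automatic from $Z_K(t) \in iM_{sa}$.

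For a general derivation $D$ I would write $D = D_1 + iD_2$ with $D_1 := \frac{1}{2}(D - D^{\sharp})$ and $D_2 := \frac{1}{2i}(D + D^{\sharp})$, both $^*$-derivations on $C_0(L,M)$ with $\|D_j\| \leq \|D\|$ (since $\|D^{\sharp}\|=\|D\|$). Applying the $^*$-derivation case to each $D_j$ produces continuous functions $Z_{K,j} : K \to iM_{sa}$ with $\|Z_{K,j}\| \leq \frac{1+2\varepsilon}{2}\|D\|$; setting $Z_K := Z_{K,1} + iZ_{K,2}$ gives $\|Z_K\| \leq (1+2\varepsilon)\|D\|$ by the triangle inequality and, by linearity of the commutator, $D(X)(t) = [Z_K,X](t)$ for every $X \in C_0(L,M)$ and $t \in K$. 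The main technical point I anticipate is the verification of lower semicontinuity of the carrier $G$, together with the application of \cite[Theorem 2.1]{AkJohn79} to upgrade point-norm compactness of $\Upsilon(K)$ to norm compactness; both steps, however, parallel the arguments already carried out in the proof of Theorem \ref{t AkJohn paracompact strengtened with control on the norm} and require no new ingredient beyond the observation that compact Hausdorff spaces are paracompact.
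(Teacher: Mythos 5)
Your argument is correct and follows essentially the same route as the paper: the paper likewise restricts the carrier construction of Theorem \ref{t AkJohn paracompact strengtened with control on the norm} to the compact (hence paracompact) set $K$ and applies Michael's selection principle there, after using \cite[Theorem 2.1]{AkJohn79} and Sakai's theorem exactly as you do. The only slip is in your final decomposition: as written, $D_1=\frac12(D-D^{\sharp})$ and $D_2=\frac{1}{2i}(D+D^{\sharp})$ satisfy $D_j^{\sharp}=-D_j$, so you should instead take $D_1=\frac12(D+D^{\sharp})$ and $D_2=\frac{1}{2i}(D-D^{\sharp})$ to obtain genuine $^*$-derivations; with that correction the estimate $\|Z_K\|\leq(1+2\varepsilon)\|D\|$ follows exactly as you say.
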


\begin{proof} We shall only prove the first statement, the second one is a straight consequence of it. For this purpose, let $D$ be a $^*$-derivation on $C_0(L,M)$. When $L$ is compact the conclusion follows from Theorem \ref{t AkJohn strengtened with control on the norm}. We can thus assume that $L$ is non-compact. Let $K$ be a compact subset of $L$. Let us fix an open susbset $O$ such that $K\subset O\subsetneq L$. We also fix an arbitrary function $f_{K,O}\in C_0(L)$ with $0\leq f_{K,O}\leq 1$, $f_{K,O}|_{K}\equiv 1$ and $f_{K,O}|_{L\backslash O} \equiv 0$.  Following the arguments in the proof of Theorem \ref{t AkJohn strengtened with control on the norm}, and by applying Lemma \ref{l weak-2-local derivation anihilates on deltat} and \eqref{eq weak2der with Gammafko} in the proof of the latter, we deduce that, for each $t\in K$, the mapping $\delta_t D \Gamma_{f_{K,O}} : M\to M$ defines a $^*$-derivation on $M$, and the mapping $\Upsilon_{f_{K,O}} : {K} \to \hbox{$^*$-Der} (M),$ $t\mapsto \Upsilon_{f_{K,O}} (t)=\delta_t D \Gamma_{f_{K,O}}$ is a $\tau$-to-(point-norm) continuous mapping, where $\tau$ stands for the topology of ${L}$ and for its restriction to $K$.
Consequently, $\Upsilon_{f_{K,O}} (K)$ is a point-norm compact subset in Der$(M)$. Theorem 2.1 in \cite{AkJohn79} assures that $\Upsilon_{f_{K,O}} (K)$ is norm compact, and hence the point-norm and the norm topologies coincide on $\Upsilon_{f_{K,O}} (\Omega)$.\smallskip

Sakai's theorem (cf. \cite[Theorem 4.1.6]{S}) assures that every derivation on $M$ is inner and the mapping $\theta: i M_{sa}/Z(M_{sa}) \to \hbox{$^*$-Der} (M)$, $a+Z(M_{sa})\mapsto \hbox{ad}_a$ is an isomorphism of Banach spaces. Thus, the mapping $\theta^{-1} \Upsilon_{f_{K,O}} : K \to i M_{sa}/Z(M_{sa})$ is continuous.\smallskip

Let us define a mapping $\Upsilon : {L} \to i M_{sa}/Z(M_{sa})$ given by the following rules: For each $t$ in $L$ let $K\subset O\subsetneq L$ with $O$ open and $K$ compact and $t\in K$, let $f_{K,O}$ be a function in $C_0(L)$ with $0\leq f_{K,O}\leq 1$, $f_{K,O}|_{K}\equiv 1$ and $f_{K,O}|_{L\backslash O} \equiv 0$. We set $\Upsilon (t) :=\theta^{-1} \Upsilon_{f_{K,O}} (t)$. We claim that $\Upsilon$ is well-defined. Indeed, suppose $K_i\subset O_i\subsetneq L$ with $O_i$ open and $K_i$ compact, $t\in K_i$, $f_{K_i,O_i}\in C_0(L)$ with $0\leq f_{K_i,O_i}\leq 1$, $f_{K_i,O_i}|_{K_i}\equiv 1$ and $f_{K_i,O_i}|_{L\backslash O_i} \equiv 0$, for $i=1,2$. By Lemma \ref{l weak-2-local derivation anihilates on deltat} we have $$ \Upsilon_{f_{K_i,O_i}} (s) (X(s)) = \delta_s D \Gamma_{f_{K_i,O_i}} (X(s)) = \delta_s D (X),$$ for every $s\in K_i$, every $i=1,2$, and every $X\in C_0(L,M)$. Therefore, $\Upsilon_{f_{K_1,O_1}} (s) = \Upsilon_{f_{K_2,O_2}} (s)$ for every $s\in K_1\cap K_2$, which proves the claim.\smallskip

Unfortunately, a locally compact Hausdorff space need not be paracompact, so Michael's selection principle cannot be applied to our mapping $\Upsilon$ as we did in the proof of Theorem \ref{t AkJohn strengtened with control on the norm}. However, for each compact subset $K\subset L$, applying the same arguments we gave in the final paragraph of the proof of Theorem \ref{t AkJohn strengtened with control on the norm} to $\Upsilon|_{K}$, we can find, via Michael's selection principle (see \cite[Theorem 3.2]{Michael56}) and Lemma \ref{l weak-2-local derivation anihilates on deltat}, a continuous function $Z_{K}: K \to i M_{sa}$ satisfying $Z_{K} \in \theta^{-1} \Upsilon (t)$ for every $t\in K$, $\|Z_{K} \| \leq \frac{1+2\varepsilon}{2} \|D\|$, and  \begin{equation}\label{eq representation by ZfKO} D(X) (t) = [Z_{K},X] (t),
\end{equation} for every $X\in C_0(L,M)$ and every $t\in K$.
\end{proof}

The previous Theorem \ref{t representation of derivations on C0LM} only produces local representations for derivations on $C_0(L,M)$, where $M$ is a von Neumann algebra and $L$ is a locally compact Hausdorff space. If we replace $M$ with $B(H)$ (or with $K(H)$), we can obtain a more global representation at the cost of loosing certain continuity on the mapping $Z_0: L\to B(H)$ that represents our derivation. When dealing with $K(H)$ we also find the obstacle that, Akeman-Johnson's theorem, asserting that on Der$(M)$ point-norm compactness and norm compactness are equivalent notions \cite[Theorem 2.1]{AkJohn79}, is only valid for von Neumann algebras.\smallskip

E.C. Lance considers in \cite[\S 2]{Lanc69} the following related problem. Let $\Omega$ be a separable compact Hausdorff space and let $H$ be a (separable) infinite dimensional Hilbert space. Then every derivation on $C(\Omega)\otimes B(H)$ is inner (see \cite[Theorem 2.4]{Lanc69}). In \cite[Lemma 2.1]{Lanc69} it is implicitly proved a result which was later materialized in \cite[Theorem 3.4]{AkEllPedTomi76} in the following terms: Let $\Gamma$ be a {separated} locally compact Hausdorff space, then every derivation on $C_0(\Gamma,K(H))$ is inner in $M(C_0(\Gamma,K(H)))$, that is, every derivation on $C_0(\Gamma,K(H))$ is determined by a multiplier. {In Theorem \ref{t representation of derivations on C0LKH} and Corollary \ref{c tau-strong* continuity of Z0 in K(H)} we show that this multiplier can be chosen bounded by a multiple of the norm of the derivation}. In Theorems \ref{t representation of derivations on C0LBH} and \ref{thm final tau-strong* continuity of Z0} a similar conclusion is proved for derivations on $C_0(L,B(H))$. Thus, our main conclusions connect the results in this paper with those previously obtained by Akemann, Elliott, Lance, Pedersen and Tomiyama, which have been commented above.\smallskip

Following the usual notation, the set of all finite dimensional subspaces of a complex Hilbert space $H$ will be denoted by $ \mathfrak{F} (H).$ We consider in $\mathfrak{F} (H)$ the natural order given by inclusion. For each $F\in \mathfrak{F} (H)$, $p_{_F}$ will denote the orthogonal projection of $H$ onto $F$.

\begin{theorem}\label{t representation of derivations on C0LKH} Let $H$ be a complex Hilbert space, let $L$ be a locally compact Hausdorff space whose topology is denoted by $\tau$, and let $D$ be a $^*$-derivation on $ C_0(L,K(H))$. Then there exists a (bounded) mapping $Z_0: L \to B(H)$ which is $\tau$-to-$\sigma(B(H),B(H)_*)$ continuous, $\| Z_0 \| \leq \|D\|$, $Z_0^*(t)=-Z_0 (t)$, for every $t\in L$, and $$D(X)= [Z_0,X],$$ for every $X\in C_0(L,K(H))$. If $D$ is a general derivation on $C_0(L,K(H))$, then  there exists a bounded function $Z_0 : L \to B(H)$ which is $\tau$-to-$\sigma(B(H),B(H)_*)$ continuous, $\| Z_0 \| \leq 2 \|D\|$ and $D(X) = [Z_0 , X] $, for every $X\in C_0(L,K(H))$.
\end{theorem}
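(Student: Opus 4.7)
The plan splits into a pointwise construction of $Z_0$ and a continuity argument; I would first handle the $^*$-derivation case and then deduce the general statement by linearity.

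For a $^*$-derivation $D$, fix $t\in L$ and choose a compact neighbourhood $K$ of $t$ with $K\subset O\subsetneq L$ open, together with a bump function $f_{K,O}\in C_0(L)$. By Lemma \ref{l weak-2-local derivation anihilates on deltat}, the map $\delta_t D \Gamma_{f_{K,O}}:K(H)\to K(H)$ is a $^*$-derivation of norm at most $\|D\|$, independent of the choice of $f_{K,O}$. Since $K(H)$ is simple, Sakai's inner derivation theorem applied in the multiplier algebra $M(K(H))=B(H)$ yields a skew-adjoint $z(t)\in B(H)$, unique modulo $i\mathbb R\,I$, with $\delta_t D \Gamma_{f_{K,O}}=\mathrm{ad}_{z(t)}|_{K(H)}$. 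By Stampfli's identity $\mathcal K_s(B(H))=\tfrac12$, the minimum-norm representative $z_{\min}(t)$ in this coset satisfies $\|z_{\min}(t)\|\leq \tfrac12\|D\|$. I would fix once and for all a unit vector $\xi_0\in H$ and define $Z_0(t)$ as the unique skew-adjoint representative of the coset satisfying $\langle Z_0(t)\xi_0,\xi_0\rangle=0$; the required scalar shift has modulus at most $\|z_{\min}(t)\|\leq \tfrac12\|D\|$, hence $\|Z_0(t)\|\leq 2\|z_{\min}(t)\|\leq \|D\|$ for every $t$. Lemma \ref{l weak-2-local derivation anihilates on deltat} then gives $D(X)(t)=[Z_0(t),X(t)]$ for every $X\in C_0(L,K(H))$ and every $t\in L$.

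The main obstacle is establishing the $\tau$-to-$\sigma(B(H),B(H)_*)$ continuity of $Z_0$. Fix $t_0\in L$ and $g\in C_0(L)$ with $g\equiv 1$ on a neighbourhood $U$ of $t_0$. For each $\eta\in H$, consider $X:=g\otimes(\eta\otimes\bar\xi_0)\in C_0(L,K(H))$; then $D(X)(t)=[Z_0(t),\eta\otimes\bar\xi_0]$ for every $t\in U$, and the norm continuity of $D(X)$ into $K(H)$ gives norm continuity of the rank-two operator $[Z_0(t),\eta\otimes\bar\xi_0]$ at $t_0$. Since $Z_0(t)^*=-Z_0(t)$, a direct computation yields
\[
[Z_0(t),\eta\otimes\bar\xi_0](\xi_0)=Z_0(t)\eta-\langle Z_0(t)\xi_0,\xi_0\rangle\,\eta=Z_0(t)\eta,
\]
the last equality using the normalisation $\langle Z_0(t)\xi_0,\xi_0\rangle=0$. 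Hence $t\mapsto Z_0(t)\eta$ is norm-continuous in $H$ at $t_0$ for every $\eta\in H$, i.e.\ $Z_0$ is strong-operator continuous on $L$. Because $\|Z_0(\cdot)\|\leq \|D\|$ uniformly, strong-operator continuity implies weak-operator continuity, which agrees with $\sigma(B(H),B(H)_*)$ on norm-bounded subsets of $B(H)$, giving the claimed continuity. Finally, for a general derivation $D$ on $C_0(L,K(H))$, the standard decomposition $D=D_1+iD_2$ with $D_1=\tfrac12(D+D^{\sharp})$, $D_2=\tfrac1{2i}(D-D^{\sharp})$ into two $^*$-derivations each of norm at most $\|D\|$ produces, by linearity, $Z_0=Z_0^{(1)}+iZ_0^{(2)}$ with $\|Z_0\|\leq 2\|D\|$ and the remaining required properties.
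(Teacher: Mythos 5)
Your argument is correct, but it takes a genuinely different route from the paper's. The paper compresses $D$ by the constant finite-rank projections $\widehat{p}_F$, applies Lemma \ref{l C0(LM_n) has the idp control of the norm} to each copy of $C_0(L,M_n)$ to get functions $Z_F$ with $\|Z_F\|\le\|D\|$, defines $Z_0(t)$ as a weak$^*$ cluster point of the bounded net $(Z_F(t))_F$ (a subnet chosen separately for each $t$), verifies $D(X)(t)=[Z_0(t),X(t)]$ by a limiting argument, and quotes \cite[Proposition 2.10]{JorPe} for the $\tau$-to-weak$^*$ continuity. You instead work pointwise from the start: simplicity of $K(H)$ and Sakai's multiplier theorem give the coset $z(t)+i\mathbb{R}I$ of skew-adjoint implementing elements, Stampfli's formula controls the norm of a minimal representative, and --- this is the key new ingredient --- the vector-state normalisation $\langle Z_0(t)\xi_0,\xi_0\rangle=0$ selects a canonical representative, so no compactness, subnet, or selection argument is needed. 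The payoff is substantial: evaluating $D\bigl(g\otimes(\eta\otimes\bar\xi_0)\bigr)$ at $\xi_0$ gives strong-operator (hence, by skew-adjointness, strong$^*$) continuity of $Z_0$ directly, a conclusion the paper only recovers later in Corollary \ref{c tau-strong* continuity of Z0 in K(H)} by invoking \cite[Corollary 3.5]{AkEllPedTomi76}; your weaker claim of $\sigma(B(H),B(H)_*)$ continuity then follows since the weak operator and ultraweak topologies agree on the bounded range of $Z_0$. Two small points are worth making explicit in a final write-up: the fact that $\delta_t D\Gamma_{f_{K,O}}$ is a derivation on $K(H)$ is the computation carried out in the proof of Theorem \ref{t pattern} (Lemma \ref{l weak-2-local derivation anihilates on deltat} by itself only yields independence of the choice of $f_{K,O}$), and before applying $\mathcal{K}_s(B(H))=\tfrac12$ one should note that $\|\mathrm{ad}_{z(t)}\|$ computed on $K(H)$ equals its norm on $B(H)$, by weak$^*$ density of the unit ball of $K(H)$ in that of $B(H)$. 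The reduction of the general case to the $^*$-derivation case via $D=D_1+iD_2$ coincides with the paper's.
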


\begin{proof} We shall only prove the first statement. For each $F\in \mathfrak{F} (H)$, we denote by $\widehat{p}_{_F}=1\otimes p_{_F}$ the constant function mapping each $t$ in $L$ to $p_{_F}$. To simplify the notation we write $C$ for the C$^*$-algebra $C_0(L,K(H))$. The mapping $\widehat{p}_{_F} D \widehat{p}_{_F}|_{\widehat{p}_{_F} C \widehat{p}_{_F}} : \widehat{p}_{_F} C \widehat{p}_{_F} \to \widehat{p}_{_F} C \widehat{p}_{_F}$, $x\to \widehat{p}_{_F}D (\widehat{p}_{_F} x \widehat{p}_{_F}) \widehat{p}_{_F}$ is a $^*$-derivation on $\widehat{p}_{_F} C \widehat{p}_{_F}$ (compare \cite[Proposition 2.7]{NiPe2014}). Since $\widehat{p}_{_F} C \widehat{p}_{_F}\cong C_0(L,M_n)$, by Lemma \ref{l C0(LM_n) has the idp control of the norm}, there exists $Z_{_F}\in C_b(L,p_{_F}B(H) p_{_F})$ with $\|Z_{_F}\| \leq \|D\|$, $Z_{_F}^* =- Z_{_F}$ and \begin{equation}\label{eq identity for pF} \widehat{p}_{_F} D (\widehat{p}_{_F} X\widehat{p}_{_F} ) \widehat{p}_{_F}  = [Z_F,\widehat{p}_{_F} X \widehat{p}_{_F}],
 \end{equation} for every $X\in C_0(L,B(H))$.\smallskip

For each $t\in L$, the net $(Z_{_F}(t))_{_{F\in\mathfrak{F}(H)}}\subset B(H)$ is bounded, and hence we can find a subnet $(Z_{_F}(t))_{_{F\in\Lambda}}$ converging to some $Z_0(t)=-Z_0^*(t)\in B(H)$ in the weak$^*$ topology of $B(H)$ with $\|Z_0 (t)\|\leq \|D\|$. We observe that the chosen subnet depends on the point $t$. In any case, the net $(p_{_F})_{_{F\in\Lambda}}$ converges to the unit of $B(H)$ in the strong$^*$ topology of the latter von Neumann algebra. We define this way a bounded map $Z_0 : L \to  B(H)$. We fix a point $t_0\in L$. By \eqref{eq identity for pF} and Lemma \ref{l weak-2-local derivation anihilates on deltat} we have \begin{equation}\label{eq identity June9}
 {p}_{_F} D (f_{\{t_0\},O}\otimes ({p}_{_F} X(t_0) {p}_{_F}) ) (t_0)  {p}_{_F}= {p}_{_F} D (\widehat{p}_{_F} X\widehat{p}_{_F} )(t_0)  {p}_{_F}  = [Z_F (t_0), {p}_{_F} X(t_0) {p}_{_F}],
 \end{equation} for every $X\in C_0(L,K(H))$, where $f_{\{t_0\},O}$ satisfies the obvious conditions.\smallskip

Now we fix an arbitrary $X\in C_0(L,K(H))$. Let $(Z_{_F}(t_0))_{_{F\in\Lambda}}$ be the corresponding subnet converging to $Z_0(t_0)$ in the weak$^*$ topology of $B(H)$. The net $(p_{_F})_{_{F\in\Lambda}}$ is an approximate unit in $K(H)$. Therefore, the net $({p}_{_F} X(t_0) {p}_{_F})_{_{F\in\Lambda}}$ converges in the norm topology of $K(H)$ to $X(t_0)$. It is not hard to see that $(f_{\{t_0\},O}\otimes ({p}_{_F} X(t_0) {p}_{_F}) )_{_{F\in\Lambda}}$ converges in norm to $f_{\{t_0\},O}\otimes X(t_0)$, and the continuity of $D$ implies that $(D (f_{\{t_0\},O}\otimes ({p}_{_F} X(t_0) {p}_{_F}) ))_{_{F\in\Lambda}}\to D(f_{\{t_0\},O}\otimes X(t_0))$ in norm. Similar arguments show that $$({p}_{_F} D (f_{\{t_0\},O}\otimes ({p}_{_F} X(t_0) {p}_{_F}) ) (t_0)  {p}_{_F})_{_{F\in\Lambda}} \to D(f_{\{t_0\},O}\otimes X(t_0)) (t_0) =D(X) (t_0)$$ in norm, where in the last equality we apply Lemma \ref{l weak-2-local derivation anihilates on deltat}. That is an appropriate subnet of the left-hand side of \eqref{eq identity June9} tends to $D(X) (t_0)$ in norm.\smallskip

Concerning the right-hand side of \eqref{eq identity June9}, we observe that $(Z_{_F}(t_0))_{_{F\in\Lambda}}\to Z_0(t_0)$ in the weak$^*$ topology of $B(H)$, and as before $({p}_{_F} X(t_0) {p}_{_F})_{_{F\in\Lambda}}\to X(t_0)$ in norm. It is known that in these circumstances, $([Z_F (t_0), {p}_{_F} X(t_0) {p}_{_F}])_{_{F\in\Lambda}} \to [Z_0(t_0), X(t_0)]$ in the weak$^*$ topology of $B(H)$ (compare \cite[Lemma 2.7]{JorPe}). Thus, we deduce from \eqref{eq identity June9} that the identity \begin{equation}\label{eq puntual identification Jun 9}  D(X) (t_0) = [Z_0(t_0), X(t_0)],
\end{equation} holds for every $X\in C_0(L,K(H))$ and every $t_0\in L$.\smallskip

The $\tau$-to-$\sigma(B(H),B(H)_*)$ continuity of $Z_0$ can be deduced as in the final paragraph of \cite[Proposition 2.10]{JorPe}.
\end{proof}

Let us note that Remark 2.11 in \cite{JorPe} shows that the mapping $Z_0:L\to B(H)$ given by Theorem \ref{t representation of derivations on C0LKH} need not be, in general, $\tau$-to-norm continuous.\smallskip

We have already commented that Sakai's theorem assures that every derivation on a C$^*$-algebra is continuous \cite{Sak60}. J.R. Ringrose proved in \cite{Ringrose72} that actually every derivation from a C$^*$-algebra $A$ into a Banach $A$-bimodule is continuous. Let us revisit other additional properties of derivations. Another result due to S. Sakai implies that every derivation on a von Neumann algebra $M$ is inner \cite[Theorem 4.1.6]{S}. It is also due to the same author that the product of a von Neumann algebra is separately weak$^*$-continuous (see \cite[Theorem 1.7.8]{S}). It follows from the last results that every derivation on a von Neumann algebra is weak$^*$-continuos.\smallskip

In general, given a von Neumann algebra $M$ and a locally compact Hausdorff space $L$, the C$^*$-algebra is not a dual Banach space, however we have some other topologies which are weaker than the norm topology. We shall consider here the``\emph{point-norm}'' and the ``\emph{point-weak$^*$}'' topologies on $C_0(L,M)$. We recall that a net $(X_{\lambda})$ in $C_0(L,M)$ converges to an element $X\in C_0(L,M)$ in the point-weak$^*$ topology (respectively, in the point-norm topology) if for each $t\in L$, the net $(X_{\lambda}(t))$ converges to $X(t)$ in the weak$^*$ topology (respectively, in the norm topology) of $M$. We prevent the reader that the term ``point-norm'' has been used with another meaning in the setting of operators, however we consider that the dual use does not produce any contradiction in this note.\smallskip

The local representation given in Theorem \ref{t representation of derivations on C0LM} will be applied to prove the following:

\begin{proposition}\label{p automatic point-weak* continuity} Let $M$ be a von Neumann algebra, and let $L$ be a locally compact Hausdorff space. Then every derivation on $C_0(L,M)$ is point-weak$^*$ continuous.
\end{proposition}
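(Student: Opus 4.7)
The plan is to reduce pointwise weak$^*$ continuity of $D$ to the local inner representation supplied by Theorem \ref{t representation of derivations on C0LM}, combined with the separate weak$^*$ continuity of multiplication in a von Neumann algebra (cf. \cite[Theorem 1.7.8]{S}).

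First, fix a point $t_0\in L$, and let $K$ be a compact neighbourhood of $t_0$ in $L$. Applying Theorem \ref{t representation of derivations on C0LM} (with any fixed $\varepsilon>0$) we obtain a continuous and bounded function $Z_{K}: K\to M$, with $\|Z_K\|\le (1+2\varepsilon)\|D\|$, such that
\begin{equation*}
D(X)(t) \;=\; [Z_K(t),X(t)] \;=\; Z_K(t)\,X(t)\,-\,X(t)\,Z_K(t),
\end{equation*}
for every $X\in C_0(L,M)$ and every $t\in K$. In particular this identity holds at the point $t_0$.

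Second, let $(X_\lambda)\subset C_0(L,M)$ be a net converging to $X\in C_0(L,M)$ in the point-weak$^*$ topology; that is, $X_\lambda(t)\to X(t)$ in $\sigma(M,M_*)$ for each $t\in L$. Applied at $t_0$, this gives $X_\lambda(t_0)\to X(t_0)$ weak$^*$. Since the element $Z_K(t_0)\in M$ is fixed and multiplication in $M$ is separately weak$^*$-continuous, both $Z_K(t_0)\,X_\lambda(t_0)\to Z_K(t_0)\,X(t_0)$ and $X_\lambda(t_0)\,Z_K(t_0)\to X(t_0)\,Z_K(t_0)$ in the weak$^*$ topology of $M$. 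Subtracting, and using the representation above, we conclude that
\begin{equation*}
D(X_\lambda)(t_0) \;=\; [Z_K(t_0),X_\lambda(t_0)] \;\longrightarrow\; [Z_K(t_0),X(t_0)] \;=\; D(X)(t_0)
\end{equation*}
weak$^*$ in $M$. Since $t_0\in L$ was arbitrary, this yields $D(X_\lambda)\to D(X)$ in the point-weak$^*$ topology of $C_0(L,M)$, proving the proposition.

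There is no real obstacle here once Theorem \ref{t representation of derivations on C0LM} is in hand: the delicate part, namely producing a bounded continuous local multiplier $Z_K$ implementing $D$ on a compact piece of $L$, has already been done, and the remaining argument is the standard observation that an inner derivation by a fixed element of a von Neumann algebra is weak$^*$-continuous. The only mild subtlety is to note that the choice of $Z_K$ depends on the neighbourhood $K$ of $t_0$, but this causes no problem because we only need the representation to hold at the single point $t_0$ and on the single net $(X_\lambda)$.
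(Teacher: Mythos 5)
Your argument is correct and follows essentially the same route as the paper's own proof: apply the local representation of Theorem \ref{t representation of derivations on C0LM} on a compact set containing $t_0$ and then invoke the separate weak$^*$ continuity of multiplication in the von Neumann algebra $M$. The minor differences (a compact neighbourhood versus merely a compact set containing $t_0$, and the precise norm bound on $Z_K$, which plays no role here) are immaterial.
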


\begin{proof} Let $(X_\lambda)_{\lambda}$ be a net in $C_0(L,M)$ converging to some $X\in C_0(L,M)$ in the point-weak$^*$ topology. Let $t_0$ be a point in $L$. Let us fix a compact set $K\subset L$ with $t_0\in K$. By Theorem \ref{t representation of derivations on C0LM} there exists a continuous and bounded function $Z_K : K \to M$ such that  $\| Z_K \| \leq 2 \|D\|$, and \begin{equation}
\label{eq represent Zk 10 06} D(X) (t) = [Z_K,X] (t),
\end{equation} for every $X\in C_0(L,M)$ and every $t\in K$. It follows from the assumptions on $(X_\lambda)_{\lambda}$ that $(X_\lambda (t_0)) _{\lambda}\to X (t_0)$ in the weak$^*$ topology of $M$. We know from \eqref{eq represent Zk 10 06} that $ (D (X_\lambda) (t_0)) )_{\lambda} = \left( [Z_K (t_0), X_\lambda (t_0)] \right), $ for all $\lambda$. The separate weak$^*$ continuity of the product of $M$ implies that $$ (D (X_\lambda) (t_0) ) )_{\lambda} = \left( [Z_K (t_0), X_\lambda (t_0)] \right)_{\lambda} \to  [Z_K (t_0), X (t_0)] = D (X) (t_0)$$ in the weak$^*$ continuity of $M$, which concludes the proof.
\end{proof}

We are now in position to establish a global representation theorem for derivations on $C_0(L,B(H))$.

\begin{theorem}\label{t representation of derivations on C0LBH} Let $H$ be a complex Hilbert space, and let $L$ be a locally compact Hausdorff space whose topology is denoted by $\tau$. Suppose $D$ is a $^*$-derivation on $ C_0(L,B(H))$. Then there exists a (bounded) mapping $Z_0: L \to B(H)$ which is $\tau$-to-$\sigma(B(H),B(H)_*)$ continuous, $\| Z_0 \| \leq \|D\|$, $Z_0^*(t)=-Z_0 (t)$, for every $t\in L$, and $$D(X)= [Z_0,X],$$ for every $X\in C_0(L,B(H))$. If $D$ is a general derivation on $C_0(L,B(H))$, then  there exists a bounded function $Z_0 : L \to B(H)$ which is $\tau$-to-$\sigma(B(H),B(H)_*)$ continuous, $\| Z_0 \| \leq 2 \|D\|$ and $D(X) = [Z_0 , X] $, for every $X\in C_0(L,B(H))$.
\end{theorem}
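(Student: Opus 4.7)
The plan is to mirror the proof of Theorem \ref{t representation of derivations on C0LKH}, with the essential novelty being that finite-rank compressions $p_F X(t) p_F$ of $X(t)\in B(H)$ no longer converge in norm to $X(t)$; only weak$^*$ convergence survives, and for this Proposition \ref{p automatic point-weak* continuity} is indispensable. As usual, I first reduce to the case of a $^*$-derivation $D$ via the decomposition $D=D_1+iD_2$ with $D_j=\frac12(D+(-1)^{j+1}D^\sharp)$ both $^*$-derivations of norm at most $\|D\|$; combining the individual representations $D_j(X)=[Z_0^{(j)},X]$ with $\|Z_0^{(j)}\|\leq\|D\|$ yields the general statement with $\|Z_0\|\leq 2\|D\|$.

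Assuming $D$ is a $^*$-derivation, for each $F\in\mathfrak{F}(H)$ the compression $x\mapsto \widehat{p}_F D(\widehat{p}_F x\widehat{p}_F)\widehat{p}_F$ is a $^*$-derivation of norm $\leq\|D\|$ on the hereditary subalgebra $\widehat{p}_F C_0(L,B(H))\widehat{p}_F\cong C_0(L,M_{\dim F})$. Lemma \ref{l C0(LM_n) has the idp control of the norm} then supplies $Z_F\in C_b(L,p_F B(H)p_F)$ with $\|Z_F\|_\infty\leq \|D\|$, $Z_F^*=-Z_F$, and
\[
\widehat{p}_F D(\widehat{p}_F X\widehat{p}_F)\widehat{p}_F=[Z_F,\widehat{p}_F X\widehat{p}_F],\qquad X\in C_0(L,B(H)).
\]
For each $t\in L$, Banach-Alaoglu gives a $\sigma(B(H),B(H)_*)$-convergent subnet of the bounded net $(Z_F(t))_{F\in\mathfrak{F}(H)}$; I define $Z_0(t)$ as its limit, so $\|Z_0(t)\|\leq\|D\|$ and $Z_0(t)^*=-Z_0(t)$ pointwise.

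The hard part, and the reason the $B(H)$ case must be handled separately from the $K(H)$ case, is passing to the limit in the pointwise identity $p_F D(\widehat{p}_F X\widehat{p}_F)(t_0)p_F=[Z_F(t_0),p_F X(t_0)p_F]$. In $K(H)$ one used norm convergence $p_F X(t_0)p_F\to X(t_0)$ (compactness of $X(t_0)$) and norm continuity of $D$. For $X(t_0)\in B(H)$ only $p_F X(t_0)p_F\to X(t_0)$ strong$^*$ holds, so $\widehat{p}_F X\widehat{p}_F\to X$ in the point-weak$^*$ topology of $C_0(L,B(H))$, and Proposition \ref{p automatic point-weak* continuity} yields $D(\widehat{p}_F X\widehat{p}_F)(t_0)\to D(X)(t_0)$ weak$^*$. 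Sandwiching by $p_F\to 1$ strongly, and using the standard principle that multiplication on bounded sets of $B(H)$ is weak$^*$-continuous against strongly convergent factors (cf.\ \cite[Lemma 2.7]{JorPe}), the left-hand side tends to $D(X)(t_0)$ weak$^*$. Along the subnet defining $Z_0(t_0)$, the right-hand side tends to $[Z_0(t_0),X(t_0)]$ weak$^*$ by the same principle, so uniqueness of weak$^*$ limits yields $D(X)(t)=[Z_0(t),X(t)]$ throughout $L$.

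Finally, the $\tau$-to-$\sigma(B(H),B(H)_*)$ continuity of $Z_0$ is extracted exactly as in the concluding paragraph of \cite[Proposition 2.10]{JorPe} and the end of the proof of Theorem \ref{t representation of derivations on C0LKH}: applying the representation to elements $f\otimes a\in C_0(L,B(H))$ for bump functions $f$ shows $t\mapsto [Z_0(t),a]$ is locally continuous, and a weak$^*$-compactness argument using boundedness and skew-adjointness of $Z_0$, together with $Z(B(H))=\mathbb{C} I$, fixes the residual scalar ambiguity and delivers the continuity. The main obstacle is the weak$^*$ passage to the limit in the commutator, and Proposition \ref{p automatic point-weak* continuity}---developed precisely to handle this situation---is what makes it possible.
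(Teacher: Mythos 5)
Your proof is correct, but it takes a genuinely different route from the paper's. You rebuild the finite-rank compression machinery of Theorem \ref{t representation of derivations on C0LKH} directly on $C_0(L,B(H))$: compress by $\widehat{p}_F$, invoke Lemma \ref{l C0(LM_n) has the idp control of the norm} on $\widehat{p}_F C_0(L,B(H))\widehat{p}_F\cong C_0(L,M_{\dim F})$, take pointwise weak$^*$ subnet limits, and then repair the failure of norm convergence of $p_F X(t_0)p_F$ by combining Proposition \ref{p automatic point-weak* continuity} with the joint weak$^*$/strong$^*$ continuity of multiplication on bounded sets; this all checks out, including the bilinear limit $[Z_F(t_0),p_FX(t_0)p_F]\to[Z_0(t_0),X(t_0)]$, since one factor converges strong$^*$ boundedly. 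The paper instead takes the already-proved $K(H)$ theorem as a black box: it notes that $C_0(L,K(H))$ is a closed ideal invariant under $D$ (via \cite[Lemma 3.4]{NiPe2015}), obtains $Z_0$ from Theorem \ref{t representation of derivations on C0LKH} applied to the restriction, and then extends the identity $D(X)=[Z_0,X]$ from the ideal to all of $C_0(L,B(H))$ by approximating $Y(t_0)$ weak$^*$ with compacts $k_\lambda$ and passing to the limit in $D(f_{K,O}\otimes k_\lambda)=[Z_0,f_{K,O}\otimes k_\lambda]$ using Proposition \ref{p automatic point-weak* continuity} and the separate weak$^*$ continuity of the product. Both routes hinge on the same two ingredients (the matrix-level lemma and point-weak$^*$ continuity); the paper's has the advantage that the $\tau$-to-$\sigma(B(H),B(H)_*)$ continuity of $Z_0$ is inherited for free from the $K(H)$ theorem, whereas you must re-derive it (your sketch via $f\otimes a$ with $a$ compact and the scalar-ambiguity argument of \cite[Proposition 2.10]{JorPe} does work, but it is the one place where your write-up is thinner than it should be). Your route has the mild advantage of not needing the invariance of the ideal under $D$.
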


\begin{proof} Let $D$ be a $^*$-derivation on $C_0(L,B(H))$. The subalgebra $C_0(L,K(H))$ is a norm closed (two-sided) ideal of $C_0(L,B(H))$. Lemma 3.4 in \cite{NiPe2015} guarantees that $D(C_0(L,K(H))) \subset C_0(L,K(H))$ and $D|_{C_0(L,K(H))} : C_0(L,K(H))\to C_0(L,K(H))$ is a $^*$-derivation (This also can be deduced from Theorem \ref{t representation of derivations on C0LM} above). By Theorem \ref{t representation of derivations on C0LKH} there exists a bounded mapping $Z_0: L \to B(H)$ which is $\tau$-to-$\sigma(B(H),B(H)_*)$ continuous, $\| Z_0 \| \leq \|D\|$, $Z_0^*(t)=-Z_0 (t)$, for every $t\in L$, and \begin{equation}\label{eq represetation on compact Jun 11} D(X)= [Z_0,X],
\end{equation} for every $X\in C_0(L,K(H))$.\smallskip

We shall show that the identity $D(X)= [Z_0,X]$ also holds for every $X\in C_0(L,B(H))$. It is enough to prove that $D(X) (t)= [Z_0(t),X (t)]$  for all  $X\in C_0(L,B(H))$ and all $t\in L$. Fix $Y\in C_0(L,B(H))$ and $t_0$ in $L$. We also pick a compact subset $K$ with $t_0$ in $K$, an open subset $O\subsetneq L$ containing $K$, and a continuous function $f_{K,O}\in C_0(L)$ with $0\leq f_{K,O}\leq 1$, $f_{K,O}|_{K}\equiv 1$ and $f_{K,O}|_{L\backslash O} \equiv 0$. By Lemma \ref{l weak-2-local derivation anihilates on deltat} we know that $D(Y) (t_0) = D( f_{K,O}\otimes Y(t_0))(t_0)$.\smallskip

The element $Y(t_0)\in B(H)$ can be approximated in the weak$^*$ topology of $B(H)$ by a net $(k_{\lambda})_{\lambda}\subset K(H).$ Clearly, the net $(f_{K,O}\otimes k_{\lambda})_{\lambda}$ lies in $C_0(L,K(H))$ and converges to $f_{K,O}\otimes Y(t_0)$ in the point-weak$^*$ topology. Proposition \ref{p automatic point-weak* continuity} implies that $$(D(f_{K,O}\otimes k_{\lambda}))_{\lambda} \to D(f_{K,O}\otimes Y(t_0))$$ in the point-weak$^*$ topology. Applying \eqref{eq represetation on compact Jun 11} we get $$(D(f_{K,O}\otimes k_{\lambda}))_{\lambda} = ([ Z_0 , f_{K,O}\otimes k_{\lambda}])_{\lambda}\to [Z_0, f_{K,O}\otimes Y(t_0)],$$ in the point-weak$^*$ topology, which proves that $$D(f_{K,O}\otimes Y(t_0)) = [Z_0, f_{K,O}\otimes Y(t_0)],$$ and thus
$$[Z_0,Y](t_0) = [Z_0, f_{K,O}\otimes Y(t_0)](t_0)= D(f_{K,O}\otimes Y(t_0)) (t_0)  =D(Y) (t_0).$$
\end{proof}

We shall finally show that in Theorem \ref{t representation of derivations on C0LBH} the conclusion concerning the continuity of the mapping $Z_0$ can be improved.

\begin{theorem}\label{thm final tau-strong* continuity of Z0} Let $H$ be a complex Hilbert space, let $L$ be a locally compact Hausdorff space whose topology is denoted by $\tau$, and let $D$ be a derivation on $C_0(L,B(H))$. Then there exists a bounded function $Z_0 : L \to B(H)$ which is $\tau$-to-norm continuous {\rm(}that is $Z_0\in M (C_0(L,B(H)))${\rm)}, $\| Z_0 \| \leq 2 \|D\|$ and $D(X) = [Z_0 , X] $, for every $X\in C_0(L,B(H))$.
\end{theorem}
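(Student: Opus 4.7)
The plan is to leverage the $\tau$-to-$\sigma(B(H),B(H)_*)$ continuous mapping $Z_0$ already produced by Theorem \ref{t representation of derivations on C0LBH} and show, without modifying $Z_0$, that its weak$^*$ continuity automatically upgrades to norm continuity. The point is that norm continuity is a local property, and on each compact subset of $L$ we already have, courtesy of Theorem \ref{t representation of derivations on C0LM}, a \emph{norm} continuous local representative with which $Z_0$ is forced to agree modulo the (one-dimensional) centre of $B(H)$.

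More precisely, first I would invoke Theorem \ref{t representation of derivations on C0LBH} to obtain a bounded $Z_0 : L \to B(H)$ with $\|Z_0\|\leq 2\|D\|$, satisfying $D(X)=[Z_0,X]$ for all $X\in C_0(L,B(H))$ and which is $\tau$-to-weak$^*$ continuous. It suffices to check norm continuity at an arbitrary $t_0\in L$, and by local compactness of $L$ we may fix a compact neighbourhood $K$ of $t_0$. Applying Theorem \ref{t representation of derivations on C0LM} to the von Neumann algebra $M=B(H)$ on this $K$ yields a norm continuous function $Z_K:K\to B(H)$ with $D(X)(t)=[Z_K,X](t)$ for every $X\in C_0(L,B(H))$ and every $t\in K$.

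Next I would compare the two representations of $D$ on $K$. Given any $b\in B(H)$ and any $t\in K$, a standard Urysohn cutoff produces $Y\in C_0(L,B(H))$ with $Y(t)=b$; then the identity $[Z_0(t)-Z_K(t),\,b]=[Z_0,Y](t)-[Z_K,Y](t)=D(Y)(t)-D(Y)(t)=0$ shows that $Z_0(t)-Z_K(t)$ commutes with everything in $B(H)$, i.e.\ lies in $Z(B(H))=\C I$. Consequently there is a scalar function $\lambda:K\to\C$ with $Z_0(t)=Z_K(t)+\lambda(t)\,I$ on $K$. Fixing any normal state $\omega\in B(H)_*$ with $\omega(I)=1$ and applying it to this identity gives
\[
\lambda(t)=\omega(Z_0(t))-\omega(Z_K(t)),
\]
whose first summand is continuous because $Z_0$ is $\tau$-to-weak$^*$ continuous and $\omega\in B(H)_*$, and whose second summand is continuous because $Z_K$ is norm continuous. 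Hence $Z_0|_K=Z_K+\lambda(\cdot)I$ is norm continuous on a neighbourhood of $t_0$; as $t_0$ was arbitrary, $Z_0$ is $\tau$-to-norm continuous on $L$, and the bound $\|Z_0\|\leq 2\|D\|$ is inherited from Theorem \ref{t representation of derivations on C0LBH}.

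The main substantive input is the existence of the \emph{norm} continuous local representative $Z_K$, which is exactly what Theorem \ref{t representation of derivations on C0LM} delivers via Michael's selection principle on the compact set $K$; once this is in hand, the only potential obstacle is to transfer norm continuity from $Z_K$ to $Z_0$, and this is resolved cleanly by the fact that $Z(B(H))=\C I$ is one-dimensional, so a single predual functional separates the scalar component and converts weak$^*$ continuity of the difference into continuity of $\lambda$.
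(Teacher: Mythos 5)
Your proposal is correct and follows essentially the same route as the paper: obtain the weak$^*$-continuous $Z_0$ from Theorem \ref{t representation of derivations on C0LBH}, compare it on a compact neighbourhood of each point with the norm-continuous local representative $Z_K$ from Theorem \ref{t representation of derivations on C0LM}, observe that the difference is a scalar multiple of $I$, and deduce continuity of that scalar from the two continuity hypotheses. Your version is in fact slightly more careful than the paper's at two points, namely the Urysohn cutoff used to show $Z_0(t)-Z_K(t)$ is central (the paper loosely says ``evaluate constant functions,'' which are not in $C_0(L,B(H))$ for non-compact $L$) and the explicit use of a normal state to extract $\lambda$.
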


\begin{proof} Everything except the $\tau$-to-norm continuity of $Z_0$ has been proved in Theorem \ref{t representation of derivations on C0LBH}. We shall show that $Z_0$ is $\tau$-to-norm continuous. Let us fix $t_0\in L$ and a compact neighborhood $K$ of $t_0$. By Theorem \ref{t representation of derivations on C0LM} there exists $Z_K\in C(K,B(H))$ such that $[Z_K,X](t)=[Z_0,X](t)$ for all $t\in K$, $X\in C_0(L,B(H))$. This implies that $Z_0 (t) - Z_K (t) = \alpha(t) I$ for all $t\in K$, where $\alpha$ is a mapping from  $K$ to $\mathbb{C}$ and $I$ is the identity operator on $H$ (this can be easily checked by evaluating constant functions on $K$). The $\tau$-to-weak$^*$ continuity of $Z_0$ combined with the $\tau$-to-norm continuity of $Z_K$ show that $\alpha:K\to \C$ is $\tau$-to-norm continuous at $t_0$. $Z_0|_{K} =Z_K+\alpha(.) I$ is $\tau$-to-norm continuous at $t_0$. Since $K$ is a compact neighborhood of $t_0$, we conclude that $Z_0$ is continuous at $t_0$.\end{proof}

\begin{corollary}\label{c tau-strong* continuity of Z0 in K(H)} Under the assumptions in Theorem \ref{t representation of derivations on C0LKH}, for each derivation $D$ on $C_0(L,K(H))$, there exists a bounded function $Z_0 : L \to B(H)$ which is $\tau$-to-$s^*(B(H),B(H)_*)$ continuous {\rm(}i.e. $Z_0\in M(C_0(L,K(H)))${\rm)}, $\| Z_0 \| \leq 2 \|D\|$ and $D(X) = [Z_0 , X] $, for every $X\in C_0(L,K(H))$.
\end{corollary}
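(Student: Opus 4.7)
The plan is to upgrade the $\tau$-to-weak$^*$ continuity of the mapping $Z_0$ already supplied by Theorem \ref{t representation of derivations on C0LKH} into $\tau$-to-$s^*$ continuity; the norm estimate $\|Z_0\|\leq 2\|D\|$ and the commutator representation $D(X)=[Z_0,X]$ come from that theorem for free. Since the strong$^*$ topology on $B(H)$ is generated, on bounded sets, by the seminorms $a\mapsto \|a\xi\|$ and $a\mapsto \|a^*\xi\|$ with $\xi\in H$, and $Z_0$ is uniformly bounded, it suffices to fix $t_0\in L$ and a unit vector $\xi\in H$ and prove that $\|(Z_0(t)-Z_0(t_0))\xi\|\to 0$ and $\|(Z_0(t)^*-Z_0(t_0)^*)\xi\|\to 0$ as $t\to t_0$.

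The central idea is to probe $Z_0$ with a well-chosen rank-one compact operator. Let $P=\xi\otimes\xi^*\in K(H)$ be the orthogonal projection onto $\mathbb{C}\xi$ and, as in the proof of Theorem \ref{t representation of derivations on C0LM}, choose a compact neighbourhood $K$ of $t_0$, an open set $O$ with $K\subset O\subsetneq L$, and a Urysohn function $f_{K,O}\in C_0(L)$ with $f_{K,O}(t_0)=1$. The element $X:=f_{K,O}\otimes P$ lies in $C_0(L,K(H))$, so $D(X)$ itself lies in $C_0(L,K(H))$ and is in particular norm-continuous at $t_0$; combining this with the uniform bound $\|[Z_0(t),P]\|\leq 4\|D\|$ and $f_{K,O}(t)\to 1$, the identity $D(X)(t)=f_{K,O}(t)\,[Z_0(t),P]$ forces the map $t\mapsto [Z_0(t),P]$ to be norm-continuous at $t_0$. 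A direct computation of $[Z_0(t),P]$ in the decomposition $H=\mathbb{C}\xi\oplus\xi^\perp$ identifies it with an off-diagonal block operator of norm
\[
\|[Z_0(t)-Z_0(t_0),P]\|=\max\Bigl(\|((Z_0(t)-Z_0(t_0))\xi)|_{\xi^\perp}\|,\ \|((Z_0(t)^*-Z_0(t_0)^*)\xi)|_{\xi^\perp}\|\Bigr),
\]
so both orthogonal-to-$\xi$ components of $(Z_0(t)-Z_0(t_0))\xi$ and $(Z_0(t)^*-Z_0(t_0)^*)\xi$ must tend to zero. The remaining $\mathbb{C}\xi$-components are $\langle(Z_0(t)-Z_0(t_0))\xi,\xi\rangle$ and its conjugate; these vanish by the already-established $\tau$-to-weak$^*$ continuity of $Z_0$, and Pythagoras yields strong$^*$ convergence at $t_0$. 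The identification $M(C_0(L,K(H)))=C_b(L,(B(H),s^*))$ recalled in Section \ref{sec:norm estimations} then gives $Z_0\in M(C_0(L,K(H)))$.

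The main obstacle, in contrast with the $B(H)$-case of Theorem \ref{thm final tau-strong* continuity of Z0}, is that the local continuous representative $Z_K$ produced by Theorem \ref{t representation of derivations on C0LM} is no longer at our disposal: $K(H)$ is not a von Neumann algebra, so neither Sakai's inner-derivation theorem nor the Michael-selection argument underlying that lemma can be invoked. The compact-probe trick circumvents this entirely: $X=f_{K,O}\otimes P$ is \emph{itself} in $C_0(L,K(H))$, so $D(X)$ is norm-continuous for free, and the rank-one geometry of $P$ captures exactly enough information to recover the strong$^*$ behaviour of $Z_0$ up to a weak$^*$-continuous scalar correction.
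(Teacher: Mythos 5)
Your argument is correct, but it is genuinely different from the one in the paper. The paper's proof is indirect: it extends $D$ to a derivation $\widetilde{D}$ of the multiplier algebra $M(C_0(L,K(H)))=C_b(L,(B(H),s^*))$ via Sakai's extension theorem, invokes \cite[Corollary 3.5]{AkEllPedTomi76} to produce an inner representative $Z_2\in M(C_0(L,K(H)))$, observes that $(Z_0-Z_2)(t)=\alpha(t)I$ for a scalar function $\alpha$, and deduces the continuity of $\alpha$ (hence the $s^*$-continuity of $Z_0=Z_2+\alpha(\cdot)I$) by comparing the weak$^*$-continuity of $Z_0$ with the $s^*$-continuity of $Z_2$ --- exactly the comparison scheme of Theorem \ref{thm final tau-strong* continuity of Z0}, with the AkEllPedTomi multiplier playing the role of the local representative $Z_K$. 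You instead upgrade the continuity of $Z_0$ directly: probing with $X=f_{K,O}\otimes(\xi\otimes\xi^*)\in C_0(L,K(H))$ makes $t\mapsto[Z_0(t),\xi\otimes\xi^*]=D(X)(t)$ norm-continuous near $t_0$ for free, the off-diagonal block computation recovers the $\xi^\perp$-components of $(Z_0(t)-Z_0(t_0))\xi$ and $(Z_0(t)^*-Z_0(t_0))^*\xi$ from that commutator, and the missing $\mathbb{C}\xi$-component $\langle(Z_0(t)-Z_0(t_0))\xi,\xi\rangle$ is supplied by the weak$^*$-continuity already established in Theorem \ref{t representation of derivations on C0LKH}; since $Z_0$ is bounded and $s^*$ is generated on bounded sets by the seminorms $a\mapsto\|a\xi\|$, $a\mapsto\|a^*\xi\|$, this suffices. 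Your route is self-contained and elementary, avoiding both the extension of $D$ to the multiplier algebra and the citation of \cite{AkEllPedTomi76}, and you correctly diagnose why the $B(H)$-argument of Theorem \ref{thm final tau-strong* continuity of Z0} is unavailable here ($K(H)$ is not a von Neumann algebra, so Theorem \ref{t representation of derivations on C0LM} gives no local continuous representative); what the paper's proof buys in exchange is brevity and an explicit link to the Akemann--Elliott--Pedersen--Tomiyama multiplier results that the section is meant to connect with.
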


\begin{proof}
Let $D$ be a derivation on $C_0(L,K(H))$ and let $Z_0$ be the mapping given by Theorem \ref{t representation of derivations on C0LKH}. By Sakai's theorem (see  \cite{Sak71a}) $D$ extends to a derivation $\widetilde{D}$ on $M(C_0(L,K(H))) =C_b(L,(B(H),s^*))$.  Corollary 3.5 in \cite{AkEllPedTomi76} affirms that $\widetilde{D}$ is inner. That is, there exists a bounded function $Z_2 : L \to B(H)$ which is $\tau$-to-$s^*(B(H),B(H)_*)$ continuous {\rm(}i.e. $Z_2\in M(C_0(L,K(H)))${\rm)}, $\widetilde{D}(X) = [Z_2 , X] $, for every $X\in M(C_0(L,K(H)))$.\smallskip

Therefore, $[Z_0-Z_2, X] =0,$ for every $X\in C_0(L,K(H))$, and hence $(Z_0-Z_2)(t) = \alpha (t) I$, for a suitable mapping $\alpha: L\to \mathbb{C}$. We deduce from the $\tau$-to-weak$^*$ continuity of $Z_0$ and the $\tau$-to-$s^*(B(H),B(H)_*)$ continuity of $Z_2$ that $\alpha$ is $\tau$-to-norm continuous. Finally, the identity $Z_0 = Z_2 +\alpha (.) I$ proves that $Z_0$ is  $\tau$-to-$s^*(B(H),B(H)_*)$ continuous.
\end{proof}

\noindent\textbf{Acknowledgements} We would like to thank our colleague Prof. J. Bonet for his useful suggestions during the preparation of this note. A part of this work was done during the visit of the second author to Universitat Polit\'ecnica de Valencia in Alcoy and to the IUMPA in Valencia. He would like to thank the Department of Mathematics of the Universitat Polit\'ecnica de Valencia and the first author for the hospitality.

\end{document}